\newtheorem{lemm}{Lemma}
\newtheorem{theo}[lemm]{Theorem}
\newtheorem{prop}[lemm]{Proposition}
\newtheorem{coro}[lemm]{Corollary}
\newcommand{\f}[1]{\widehat{#1}}
\newcommand{\fxy}[1]{\mathcal{F}_{xy} \left( {#1}\right)}
\newcommand{\ft}[1]{\mathcal{F}_t \left( {#1}\right)}
\newcommand{\bi}{\mathbf{i}}
\newcommand{\bj}{\mathbf{j}}
\renewcommand{\H}{\mathcal{H}}
\title{Polynomial Growth of high Sobolev norms of solutions to the Zakharov-Kuznetsov equation }
\date{\today}
\author{Raphaël Côte and Frédéric Valet}
  \normalsize \textsc{Frédéric Valet} \par \small 
\begin{document}

\maketitle
\let\thefootnote\relax\footnotetext{2010 \textit{Mathematics Subject Classification} : 35Q53 (primary), 35B65, 35Q35.}
\let\thefootnote\relax\footnotetext{ \textit{Key words :} Zakharov-Kuznetsov equation; growth of high Sobolev norms.}

\begin{abstract}
We consider the Zakharov-Kuznetsov equation  \eqref{ZK} in space dimension 2. Solutions $u$ with initial data $u_0 \in H^s$ are known to be global if $s \ge 1$. We prove that for any  integer $s \ge 2$, $\| u(t) \|_{H^s}$ grows at most polynomially in $t$ for large times $t$. This result is related to wave turbulence and how a solution of \eqref{ZK} can move energy to high frequencies. 

It is inspired by analoguous results by Staffilani \cite{Sta} on the non linear Schrödinger and Korteweg-de-Vries equation. The main ingredients are adequate bilinear estimates in the context of Bourgain's spaces and a careful study of the variation of the $H^s$ norm.

\end{abstract}

\vspace{1cm}

We are interested in the 2D Zakharov-Kuznetsov equation
\begin{align}\tag{ZK}\label{ZK}
\begin{cases}
\partial_t u + \partial_x \left( \Delta u + u^2 \right)=0, \quad u:  I_t \times \mathbb{R}^2_{x,y} \rightarrow \mathbb{R} \\
u(0) = u_0 \in H^s(\mathbb{R}^2)
\end{cases}
\end{align}
where $\Delta=\partial_x^2 + \partial_y^2$ is the laplacian on $\mathbb{R}^2$, the time interval $I_t \ni 0$ is the interval of existence. This equation has been studied to model propagation, in magnetized plasma, of non-linear ion-acoustic waves, see \cite{KZ}. \eqref{ZK} has also been derived from the Euler-Poisson system in dimension $d=2$ and $d=3$ by Lannes, Linares and Saut in \cite{LLS}, and from the Vlasov-Poisson equation by Han-Kwan \cite{Han13}. 

This equation naturally enjoys some conserved quantities (at least formally) like the mass and the energy:
\begin{align*}
M(u) := \frac{1}{2}\int_{\mathbb{R}^2} u^2(t,x,y) dxdy  , \quad E(u) := \frac{1}{2} \int \left( \vert \nabla u(t,x,y) \vert^2- \frac{1}{3} u(t,x,y) ^3 \right)dxdy.
\end{align*}
The Cauchy problem of (\ref{ZK}) has been first studied by Faminskii in \cite{Fam}, who showed local and global well-posedness in $H^1(\mathbb{R}^2)$. This local well-posedness has then been improved in \cite{LP11}, and then by Molinet and Pilod in \cite{MP} and independently by Grünrock and Herr in \cite{GH} who proved the local well-posedness in $H^s$ for $s\ge\frac{1}{2}$. Recently, Kinoshita \cite{Kin} improved this and prove local well-posedness in $H^s$ for any $s > -\frac{1}{4}$, which is the sharp exponent. 

For $s \ge 1$, the mass $M(u)$ and the energy $E(u)$ are well defined and preserved by the flow.
Due to a Gagliardo-Nirenberg inequality, there exist a universal constant $C$ such that
\begin{equation} \label{est:H1_ME}
\forall v \in H^1(\mathbb R^2), \quad \| v \|_{H^1}^2 \le C(1+ E(v) + M(v)^2).
\end{equation}
As a consequence \eqref{ZK} is globally well-posed, and the $H^1$ norm $\| u(t) \|_{H^1}$ remains bounded for all times.

One can naturally ask what happens for $\| u(t) \|_{H^s}$. If for some $s >1$, $\| u(t) \|_{H^s} \to +\infty$, one speaks of \emph{energy cascade phenomenon}, which means that some energy move from low frequencies to high frequencies: it is an important aspect of out of equilibrium dynamics, predicted and studied on a number of nonlinear dispersive models,  under the name of \emph{wave turbulence} in the Physics literature. To the contrary, for integrable systems, one expects that all $\| u(t) \|_{H^s}$ remain bounded.

One should note that the proof of local well-posedness often allows to give exponential (or double exponential) bounds on $\| u(t) \|_{H^s}$. On the other hand, constructing a solution which displays an energy cascade phenomenon is very delicate, we refer to the work by Hani, Pausader, Tzvetkov and Visciglia \cite{HPTV15} for an example in the context of Schrödinger type equations. 

\bigskip

In this article, we prove that the $H^s$-norm of a solution $u$ of  \eqref{ZK} equation grows at most polynomially for large times. 

\begin{theo} \label{th1}
Let $s\ge 2$ be an integer and $u_0 \in H^s(\mathbb{R}^2)$. Denote $u$ the solution of \eqref{ZK} with initial data $u_0$ and $A = \sup_{t \ge 0} \| u(t) \|_{H^1}$. Then $u \in \mathcal C(\mathbb R, H^s)$ and for any $\beta> \frac{s-1}{2}$,  there exist a constant $C = C(s,\beta,A)$, such that
\begin{align} \label{control_poly}
\forall t \in \mathbb R, \quad \| u(t) \|_{H^s} \le C (1+|t|)^\beta( 1+ \| u_0 \|_{H^s}),
\end{align}
\end{theo}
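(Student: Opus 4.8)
The plan is to estimate the growth of $\|u\|_{H^s}^2$ not infinitesimally but over a fixed time step $\delta$ whose length depends only on the conserved quantity $A$, and then to iterate a discrete inequality of the form $a_{n+1}\le a_n+C\,a_n^{1-\gamma}$. I fix $s$ an integer and use $\|u\|_{H^s}^2=\sum_{|\alpha|\le s}\|\partial^\alpha u\|_{L^2}^2$; the whole difficulty sits at top order $|\alpha|=s$, the lower-order contributions being handled by the same estimate with room to spare. Because \eqref{ZK} is subcritical (well-posed below $H^1$) and globally well-posed in $H^1$, the local theory in Bourgain's spaces $X^{s,b}$ provides a length $\delta=\delta(A)>0$, \emph{independent} of $\|u(t_0)\|_{H^s}$, on which by persistence of regularity $\|u\|_{X^{s,b}([t_0,t_0+\delta])}\lesssim \|u(t_0)\|_{H^s}$, using that $\|u(t_0)\|_{H^1}\le A$ is conserved. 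This uniformity of the time step is what ultimately separates polynomial from exponential growth.

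Next I compute the variation of the norm. Writing the increment as $\int_{t_0}^{t_0+\delta}\frac{d}{dt}\|u\|_{H^s}^2\,dt$ and inserting the equation, the linear flow $\partial_x\Delta$ is skew-adjoint and commutes with $\partial^\alpha$, so it contributes nothing; the entire increment reduces to the space-time trilinear integral coming from $\partial_x(u^2)=2u\,\partial_x u$. For each multi-index $|\alpha|=s$ the dangerous piece is $\int \partial^\alpha u\cdot u\,\partial_x\partial^\alpha u$, which after one integration by parts in $x$ collapses to the commutator-type expression $-\tfrac12\int \partial_x u\,(\partial^\alpha u)^2$. This carries $s$ derivatives on two factors and only $1$ on the third (total $2s+1$), and the naive bound $\|\partial_x u\|_{L^\infty}\|\partial^\alpha u\|_{L^2}^2$ would only yield exponential growth via Gronwall. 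The remaining, genuinely lower-order terms are estimated directly.

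The heart of the proof is a bilinear smoothing estimate in $X^{s,b}$ extracting almost one full derivative from the nonlinearity. Exploiting the resonance function of \eqref{ZK}, namely $\phi=(\xi_1^3+\xi_1\eta_1^2)+(\xi_2^3+\xi_2\eta_2^2)-(\xi_0^3+\xi_0\eta_0^2)$ with $(\xi_0,\eta_0)=(\xi_1+\xi_2,\eta_1+\eta_2)$, the aim is to show that in the region where two factors carry comparable high $x$-frequency the modulation is large, which converts the derivative $\partial_x$ of the nonlinearity into a genuine gain. Concretely, I target a bound
\begin{equation*}
\Big|\int_{t_0}^{t_0+\delta}\!\!\int \partial_x(u^2)\,\partial^{2\alpha}u\,dx\,dy\,dt\Big|\lesssim \delta^{\theta}\,C(A)\,\|u\|_{X^{s-1+\varepsilon,b}}^2,
\end{equation*}
losing only $\varepsilon$ of a derivative relative to $\|u\|_{X^{s,b}}^2$ while gaining one derivative overall against the naive estimate. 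The main obstacle is exactly this step in two dimensions: the anisotropy of $\partial_x\Delta=\partial_x^3+\partial_x\partial_y^2$ makes $\phi$ degenerate in the configurations where the transverse frequencies dominate or near the resonant set, so the full derivative gain requires the refined bilinear Strichartz and smoothing estimates underlying the $H^{1/2}$ well-posedness, sharpened to produce a gain rather than merely to close a fixed point.

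Finally I interpolate and iterate. Gagliardo–Nirenberg between the conserved $H^1$ norm and $H^s$ gives $\|u\|_{H^{s-1+\varepsilon}}\lesssim A^{(1-\varepsilon)/(s-1)}\,\|u\|_{H^s}^{\,1-(1-\varepsilon)/(s-1)}$, which turns the smoothing gain into the increment inequality $\|u(t_0+\delta)\|_{H^s}^2\le\|u(t_0)\|_{H^s}^2+C(A)\,\|u(t_0)\|_{H^s}^{2-\gamma}$ with $\gamma$ as close to $2/(s-1)$ as wished (the loss of the endpoint coming from $b>\tfrac12$, from $\theta>0$, and from $\varepsilon>0$). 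Setting $a_n=\|u(t_0+n\delta)\|_{H^s}^2$, the resulting recursion $a_{n+1}\le a_n+C(A)\,a_n^{1-\gamma/2}$ yields $a_n\lesssim n^{2/\gamma}$, hence $\|u\|_{H^s}\lesssim n^{1/\gamma}$ with $1/\gamma$ arbitrarily close to $(s-1)/2$ from above. Since $n\approx |t|/\delta(A)$ and the starting value is $a_0=\|u_0\|_{H^s}^2$, tracking homogeneities gives \eqref{control_poly} for every $\beta>\tfrac{s-1}{2}$; running the same argument for $t\le 0$ handles negative times, and the bounds also furnish the continuity $u\in\mathcal C(\mathbb R,H^s)$.
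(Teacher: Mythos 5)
Your skeleton is the right one, and it is the paper's: a local existence time $T(A)$ depending only on the conserved $H^1$ size (Proposition \ref{theo_point_fixe} — note that even this step needs the tame bilinear estimate \eqref{bilinear3}, not just subcriticality), a variation estimate for $\|u\|_{H^s}^2$ over one time step in Bourgain norms, interpolation against the bounded $X^{1,b}$ norm, and the discrete iteration lemma (Lemma \ref{lem13}). Your reduction at top order is also correct: after integration by parts the dangerous piece is the commutator term $-\tfrac12\int\partial_x u\,(\partial^\alpha u)^2$ with $|\alpha|=s$, which is exactly the term $I_s^2$ of the paper, and you rightly note that $\|\partial_x u\|_{L^\infty}$ plus Gronwall only gives exponential growth.

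The gap is your displayed ``target'' estimate: it is the entire technical content of the theorem, you do not prove it, and at the strength you state it, it is not true. Count regularity: the trilinear term carries $2s+1$ derivatives distributed over three copies of $u$; bounding it by $C(A)\,\|u\|_{X^{s-1+\varepsilon,b}}^2$, with the third factor absorbed at the $H^1$ level into $C(A)$, requires gaining $2-2\varepsilon$ derivatives from the resonance — essentially two full derivatives of bilinear smoothing. Nothing close to this is available for 2D ZK: the dispersion degenerates on the cone $3\xi^2=\mu^2$ (the Hessian determinant of $\omega$ is $4(3\xi^2-\mu^2)$), the Molinet--Pilod estimate \eqref{bilinear1} gains exactly the one derivative $\partial_x$ but only for regularity $s>\tfrac12$ (hence is unusable on the squared top-order factor), and the paper's new Proposition \ref{bilinear2prop} — $\|uv\|_{X^{-\rho,b'}}\le C\|u\|_{X^{-\rho,b}}\|v\|_{X^{-\rho,b}}$ — is restricted to $\rho<\tfrac12-6\delta$, i.e.\ a gain strictly below half a derivative. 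Proving even that much is not a soft sharpening of the local theory: it takes a five-regime dyadic decomposition, the Carbery--Kenig--Ziesler $L^4$ restriction estimate through $K(D)^{1/8}$ (\eqref{restriction_theorem}), Lemma \ref{lemma7}, and the new high$\times$high interaction bound \eqref{measure3} of Lemma \ref{lemma11}, which needs sign conditions on $\xi_1\xi_2,\mu_1\mu_2$ and an angular localization parameter $\alpha$ precisely in the configurations you gesture at (transverse frequencies dominant, near the resonant cone). Identifying the obstacle is not overcoming it.

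Consequently your exponent bookkeeping is unsupported. With the achievable gain $\rho<\tfrac12$, the increment inequality \eqref{ineq} holds with $\epsilon=\frac{\rho}{s-1}<\frac{1}{2(s-1)}$ (the binding term being $I_s^2$, with exponent $2-\frac{\rho}{s-1}$; the mixed terms $I_s^1$ and $I_{1\cdots s-1}$, which you dismiss as ``estimated directly'', also require \eqref{bilinear1} in $X^{\pm\rho,b}$ spaces and interpolation, though they turn out non-binding), and Lemma \ref{lem13} then yields $\beta=\frac{1}{\epsilon}^+>2(s-1)$, not your $\gamma\to\frac{2}{s-1}$ giving $\beta\to\frac{s-1}{2}$. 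Your claimed endpoint does coincide with the exponent displayed in Theorem \ref{th1}, but it is not what the paper's own estimates deliver — from $\epsilon<\frac{1}{2(s-1)}$ the proof as written produces $\beta>2(s-1)$, so the stated $\frac{s-1}{2}$ appears inconsistent with the proof — and in any case it cannot be reached by positing, without proof, a two-derivative smoothing gain that the equation does not possess.
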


The start of the study of the polynomial growth of norms is due to Staffilani \cite{Sta} in the context of non linear Schrödinger and Kor\-te\-weg-de Vries type equations, with ideas of Bourgain \cite{Bou93} and \cite{Bou93bis}. It was later extended to many situations: let us refer to Sohinger \cite{Soh11} for the Schrödinger and Hartree equations, or Planchon, Tzvekov, Visciglia \cite{PTV17} for the Schrödinger equation on a manifold, and the references therein.

\bigskip

The method of proof in \cite{Sta} is to refine the local well-posedeness statement. Usually, what is proved is that given $u_0$, there exist $C,T >0$ such that one can construct a solution $u$ on $[-T,T]$ and such that
\begin{equation} \label{eq:lwp}
\forall t \in [-T,T], \quad \| u(t) \|_{H^s} \le C \| u(0) \|_{H^s}.
\end{equation}
If $C$ and $T$ only depend on $\| u_0 \|_{H^1}$, one can use an iteration argument and obtain global well-posedness in $H^s$ with an exponential bound on the $H^s$ norm. The heart of the method lies in the observation that, for $H^s$-norms (with $s>1$), a similar bound can be obtained, but with a slightly better exponent on the $H^s$ norm on the right-hand side. More precisely, there exists $\epsilon >0$ and two functions $C,T:  [0,+\infty) \to [0,+\infty)$ ($C$ increasing, and $T$ decreasing) such that for any solution $u$ of \eqref{ZK}, and for all $t_0 \in \mathbb{R}$,
\begin{equation} \label{eq:imp_lwp}
\forall t \in [t_0 - T(\| u(t_0) \|_{H^1}), t_0 + T(\| u(t_0) \|_{H^1})], \quad \| u(t) \|_{H^s} \le \| u(t_0) \|_{H^s} + C(\| u(t_0) \|_{H^1}) (1+\| u(t_0) \|_{H^s}^{1-\epsilon}).
\end{equation}
The key point is that the time of existence $T$ and the amplification factor $C$ do only depend of the $H^1$ norm of $u$, which is known to be uniformly bounded for all times, and so will essentially not depend on $t_0$.

With the above improved bound \eqref{eq:imp_lwp} in hand, one can conclude the proof of Theorem \ref{th1} by a straightfoward iteration argument, by working on the time intervals $[k T(A), (k+1) T(A)]$ for $k \in \mathbb{N}$ (see Lemma \ref{lem13} for details).

To derive estimate \eqref{eq:lwp}, one considers the integral formulation of the equation, and proves that the Duhamel term enjoys a better behavior than the linear term; we use Strichartz estimates, a bilinear estimate due to Molinet and Pilod \cite{MP}, and its tame version. With \eqref{eq:lwp} in hand, one can then look for the improved version \eqref{eq:imp_lwp}. For this, \cite{Sta} makes use of bilinear estimates due to Kenig-Ponce-Vega in \cite{KPV96} in Bourgain spaces $X^{s,b}$, \cite{Soh11} relies on the $I$-method (first developed in \cite{CKSTT}), and \cite{PTV17} considers high order modified functionals of energy type. In this paper, we will follow the method in \cite{Sta} and work in suitable Bourgain spaces. In order to derive \eqref{eq:imp_lwp}, we study the variation of the $H^s$ norm. Compared with (KdV) for example, one has to be extra careful on how derivatives fall on each factor, because the algebraic structure of \eqref{ZK} is not as strong as that of (KdV), and the dispersion effects are weaker for \eqref{ZK} than for (KdV): as it can be seen from the bilinear estimates in \cite{MP} which require positive index of regularity (and the fact that the local well posedness results are not optimal). To suitably bound the worst terms, we prove a new bilinear estimate (in Proposition \ref{bilinear2prop}) for negative regularity, which is the main new technical tool of this paper.

\bigskip

The article is organized as follows. We recall in section 1 Strichartz estimates and embeddings between Bourgain's spaces and Sobolev spaces for \eqref{ZK}, in order to fix notations. In section 2, we collect and prove the bilinear estimates required to deal with the non linear term. In section 3, we prove a local well posedness result with time of existence depending only on $\| u(0) \|_{H^1}$, and then we conclude the proof of Theorem \ref{th1}.

\section{Notations and basic facts}

\subsection{Notations}

For $x\in \mathbb{C}$, $\vert x \vert$ denotes the module of $x$, and for $(x,y)\in \mathbb{R}^2$, we use the norm
\[ \vert (x,y) \vert := \sqrt{3x^2 +y^2}, \]
(it is anistropic but convenient for our purposes). We denote by $\f{u}$ the Fourier transform with respect to the \emph{space and time} variables, and the inverse Fourier transform by $u^\vee$ . The Fourier transform with respect to $t$ or the space variables only will be respectively denoted $\ft{u}$ or $\fxy{u}$. The derivatives will be denoted $\partial_t$, $\partial_x$ or $\partial_y$.

For a general function $P(t,x,y)$, we define the associated  self-adjoint and positive differential operator by:
\begin{align*}
P(D) f(t,x,y):= \int_{\mathbb{R}^3} \left\vert P(i\tau, i\xi, i\eta) \right\vert e^{i\left( \tau t  + \xi x + \eta y \right)} \f{f}(\tau, \xi,\eta) d\tau d\xi d \eta.
\end{align*}
For instance, we will regularly use the differential operators $D_x$ for $P=x$, $D_y$ for $P=y$ and $D_t$ for $P=t$ acting on frequencies:
\begin{align*}
\fxy{D_x f}(\xi,\eta)= \vert \xi \vert \fxy{f}(\xi,\eta), \quad \fxy{D_y f}(\xi,\eta)= \vert \eta \vert \fxy{f}(\xi,\eta), \quad \ft{D_t f}(\tau)= \vert \tau \vert \ft{f}(\tau).
\end{align*}
We define the derivation operator $S(D)$ with $S(t,x,y)=\langle \vert (x,y) \vert \rangle$ (where  $\langle a \rangle:=\left( 1+ a^2\right)^\frac{1}{2}$ is the Japanese bracket), to the power $s$ to define the Sobolev space $H^s(\mathbb{R}^2)$:
\begin{align}\label{S}
\fxy{ S(D)^s f} (x,y) := \langle \vert (\xi,\eta) \vert \rangle^s \fxy{f}(\xi,\eta), \quad \| f \|_{H^s} := \| S(D)^s f \|_{L^2}. 
\end{align}

We will manipulate multi-indices at many places: they will always be denoted by a bold letter $\bi=(i_1,i_2)$, with $i_1$ and $i_2$ in $\mathbb{N}$. Then we denote $D^\bi$ the differential operator with $i_1$ derivatives on the $x$ variable, and $i_2$ on the $y$ variable:
\begin{align*}
D^\bi f(x,y):= D^{i_1}_x D^{i_2}_y f (x,y)= \int_{\mathbb{R}^2} \vert \xi \vert^{i_1} \vert \eta \vert^{i_2} e^{i\left( \xi x + \eta y \right)} \fxy{f}(\xi,\eta)d\xi d\eta.
\end{align*}

The length of a multi-index $\bi = (i_1,i_2)$ will be denoted by $\vert \bi \vert := i_1+i_2$. We also use a partial order on multi-indices:
\begin{align*}
\bj \le \bi \quad \text{ if } \quad j_1 \le i_1 \text{ and } j_2 \le i_2, \quad \text{ and } \quad \bj < \bi \quad \text{ if } \quad (j_1+1, j_2) \le \bi \text{ or } (j_1, j_2 +1) \le \bi.
\end{align*}

The linear part of the equation (\ref{ZK})  $\partial_{t} u + \partial_x \Delta u =0$  induces a semigroup denoted $W:\mathbb{R}_t\rightarrow \mathcal{L}(L^2(\mathbb{R}^2))$, defined by :
\begin{align*}
\fxy{ W(t) u}(\xi,\mu):= e^{i t\omega(\xi, \mu)}\fxy{u}(t,\xi,\mu) \quad \text{ with } \quad \omega(\xi,\mu)=\xi \left( \xi^2 + \mu^2\right).
\end{align*}
Similarly, this linear flow can be represented in space-time, which motivates the introduction of the function $F(t,x,y) :=  \langle \vert t+x(x^2+y^2) \vert\rangle$ and the operator $F(D)$ to the power $b\in \mathbb{R}$:
\begin{align}\label{F}
\f{F(D)^b f}(\tau, \xi, \eta) := \langle \tau- \omega( \xi,\eta) \rangle^b \f{f}(\tau, \xi, \eta).
\end{align}
We then introduce the adapted Bourgain spaces. Given two indices $s,b \in \mathbb{R}$, the Bourgain space $X^{s,b}$ takes into account first the norm $H^s$ in space of a function $u(t,x,y)$, and secondly how far away ($H^b$ in time) the solution is from the linear flow: the associated norm is 
\begin{align*}
\| u \|_{X^{s,b}}^2:= \| S(D)^s F(D)^b u \|_{L^2}^2= \int_{\mathbb{R}^3} \langle \vert (\xi, \mu)\vert \rangle ^{2s} \langle \tau- \omega(\xi, \mu)\rangle^{2 b} \left\vert \f{u}(\tau, \xi, \mu) \right\vert ^2 d\tau d\xi d\mu,
\end{align*}
where  $S$ and $F$ are  defined above in (\ref{S}) and (\ref{F}). $X^{s,b}$ is the completion of the Schwartz functions of $\mathbb R_t \times \mathbb R^2_{x,y}$ for this norm. 

Because of the conserved quantities, all the solutions  $u$ of (\ref{ZK}) (even small) will not have finite $X^{s,b}$-norm. This is why we will consider solution to a version of the integral formulation of \eqref{ZK} which is truncated in time, by the use of a smooth cut-off function. Let the non-negative smooth function $\phi$ equal to 1 on $[0,1]$, and $0$ outside $[-1,2]$, and the $T$-dilated functions $\phi_T(t)= \phi\left( \frac{t}{T}\right)$. We will always assume in the following that $T \le 1$. The truncated Duhamel operator is given by the following formula:
\begin{align}\label{Psi_1}
\Psi(w)(t):= \phi_1(t) W(t)u_0 + \phi_T(t) \int_0^t W(t-t')\phi_T(t')^2\partial_x \left( w^2 \right)(t')dt'.
\end{align}
 Observe that if we can construct a fixed point $w$ such that $\Psi(w) = w$, then $w$ is a solution to \eqref{ZK} on the interval $[-T,T]$ (with initial data $w(0) = u_0$).

In order to prove the various (bilinear) estimates, we will need to work with a Littlewood-Paley decomposition. Let a positive and non increasing function $\chi_0$ in $\mathcal{C}^\infty_0([0,+\infty))$, such that
\[ \chi_0(r) =1 \quad \text{if } r \le \frac{5}{4}, \quad \text{and} \quad \chi_0(r) = 0 \quad \text{if } r \ge \frac{8}{5}. \]
 Let 
\begin{align*}
\chi(r)= \chi_0\left(\frac{r}{2}\right)-\chi_0 (r) \quad \text{and } \forall j \in \mathbb{N}, \quad \chi_j(r) := \chi \left( \frac{r}{2^j} \right).
\end{align*}
We will use the following truncation operators, related to space and to the linear flow, recalling that $\vert(\xi, \mu) \vert = \sqrt{3\xi^2 + \mu^2}$:
\begin{align} \label{def_PQ}
P_N(u):= \left( \chi_N(\langle \vert( \xi,\mu) \vert \rangle) \f{u}(\tau,\xi,\mu) \right)^\vee \text{ and } Q_L u := \left( \chi_L \left( \langle \tau-\omega(\xi,\mu) \rangle \right) \f{u}(\tau,\xi,\mu) \right)^\vee. 
\end{align}

For $b \in \mathbb R$, we will sometimes denote $b^+$ for any number $b+\epsilon$ with $\epsilon >0$ small. Finally, given two functions $f$ and $g$ (depending on space and/or time), we denote
\begin{align*}
f \lesssim g \quad \text{if} \quad \exists C>0, \quad f \le C g,
\end{align*}
for an absolute implied constant $C$, independent of $f$ and $g$ (unless otherwise specified), which can change from one line to the next.
Sometimes $\lesssim$ is used for quantities involving a $b^+$: in that case, the implied constant may depend on $\epsilon>0$.

\subsection{Strichartz estimates and Bourgain spaces}

Now we recall some lemmas which are the heart the following proofs, to begin with the Strichartz estimates for the Zakharov-Kuznetsov equation in dimension 2:

\begin{prop}[{\cite[Proposition 3.1]{LS} and \cite[Proposition 2.4]{LP09}}]
Let $\epsilon \in \left[0, \frac{1}{2} \right) $, $\theta \in [0,1]$ and define
 \[ p=\frac{2}{1-\theta}, \quad \text{and} \quad q= \frac{6}{\theta(2+ \epsilon)}. \]
Let $f$ be a function defined on $\mathbb{R}^2$, and $g$ be defined on $\mathbb{R}_t \times \mathbb{R}^2 $.  Then the following estimates hold:
\begin{align} \label{Strichartz}
\left\|D_x^{\frac{\epsilon \theta}{2}}W(t)f \right\|_{L^q_t L^p} & \lesssim \|f \|_{L^2}, \\
\left\| D_x^{\theta \epsilon} \int W(t-t')g(t')dt' \right\|_{L^q_t L^p} & \lesssim \| g \|_{L^{q'}_tL^{p'}}, \nonumber \\
\left\|D_x^{\theta \epsilon} \int W(t) g(t) dt \right\|_{L^2} & \lesssim \|g \|_{L_t^{q'}L^{p'}}. \nonumber
\end{align}
\end{prop}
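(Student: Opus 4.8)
The plan is to treat the proposition as a standard Strichartz package and reduce the three estimates to a single homogeneous bound. Writing $Tf := D_x^{\epsilon\theta/2} W(t) f$, the first line asserts that $T : L^2 \to L^q_t L^p$ is bounded; the second and third lines are then its inhomogeneous and dual counterparts, obtained by the classical duality and $TT^*$ arguments, with the powers of $D_x$ distributed according to the usual Strichartz bookkeeping. Crucially, the time integration in the inhomogeneous estimate is untruncated, so the composition $T\,T^*$ is immediate and no Christ--Kiselev lemma is needed (that refinement would only be required for a retarded version $\int_0^t$). Hence it suffices to establish the homogeneous estimate on the first line.

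For the homogeneous estimate I would interpolate between the two endpoints in $\theta$. At $\theta = 0$ one has $p = 2$, $q = \infty$ and no derivative, so the claim is merely the unitarity $\| W(t) f\|_{L^\infty_t L^2} = \|f\|_{L^2}$. The whole difficulty is concentrated at $\theta = 1$, where $p = \infty$, $q = 6/(2+\epsilon)$, and one must prove $\|D_x^{\epsilon/2} W(t) f\|_{L^q_t L^\infty_{xy}} \lesssim \|f\|_{L^2}$. Complex/analytic interpolation (in $\theta$, incorporating $D_x^{z}$ as an analytic family) then yields the full range of intermediate exponents.

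To handle the endpoint $\theta = 1$ I would again pass to $TT^*$, reducing to a pointwise bound on the kernel of $D_x^\epsilon W(\tau)$ as an operator $L^1_{xy} \to L^\infty_{xy}$, namely on
\[
K_\tau(x,y) := \int_{\mathbb R^2} |\xi|^{\epsilon}\, e^{i\left( x\xi + y\mu + \tau\, \omega(\xi,\mu)\right)}\, d\xi\, d\mu , \qquad \omega(\xi,\mu) = \xi(\xi^2 + \mu^2).
\]
The parabolic scaling $(\xi,\mu) = |\tau|^{-1/3}(\xi',\mu')$ extracts the exact decay rate, giving $|K_\tau(x,y)| = |\tau|^{-(2+\epsilon)/3}\,|I(a,b)|$ with $a = x|\tau|^{-1/3}$, $b = y|\tau|^{-1/3}$, and $I(a,b) := \int_{\mathbb R^2} |\xi|^\epsilon e^{i(a\xi + b\mu + \xi^3 + \xi\mu^2)}\, d\xi\, d\mu$. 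A uniform bound $\sup_{a,b}|I(a,b)| < \infty$ would yield $\|K_\tau\|_{L^\infty_{xy}} \lesssim |\tau|^{-(2+\epsilon)/3}$; since $(2+\epsilon)/3 = 2/q$ and this exponent is $<1$ for $\epsilon<1$, a fractional integration (Hardy--Littlewood--Sobolev) in time over the $TT^*$ kernel then closes the endpoint estimate.

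The main obstacle is therefore the uniform oscillatory integral bound on $I(a,b)$. Integrating first in $\mu$ (a Fresnel integral with quadratic phase $b\mu + \xi\mu^2$) produces a factor $|\xi|^{-1/2}$ and, after evaluating at the stationary point $\mu_*=-b/(2\xi)$, reduces the problem to the one-dimensional integral $\int |\xi|^{\epsilon-1/2}\,e^{i\Phi(\xi)}\,d\xi$ with $\Phi(\xi) = a\xi + \xi^3 - b^2/(4\xi)$. Two regions are delicate: the low-frequency region $\xi \to 0$, where the weight $|\xi|^{\epsilon-1/2}$ and the singular phase term must be reconciled and where the restriction $\epsilon < \tfrac12$ enters in securing a uniform bound; and the stationary points of $\Phi$, which correspond exactly to the vanishing of the Hessian determinant of $\omega$, equal to $4\tau^2(3\xi^2 - \mu^2)$, along the lines $\mu = \pm\sqrt 3\,\xi$ where the dispersion degenerates. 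On these regions I would invoke van der Corput's lemma with the appropriate second- or third-order derivative of the phase, the weight $|\xi|^{\epsilon}$ compensating precisely for the weakened dispersion in $x$ near $\xi = 0$. This oscillatory analysis, carried out in the cited works \cite{LS, LP09}, is the technical heart and the step I expect to demand the most care.
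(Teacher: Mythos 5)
The paper does not actually prove this proposition: it is quoted verbatim from \cite{LS} and \cite{LP09}, so the comparison must be with the proofs in those references — and your reconstruction matches them. The cited proof establishes exactly the kernel decay $\| D_x^{\epsilon} W(\tau) \|_{L^1 \to L^\infty} \lesssim |\tau|^{-(2+\epsilon)/3}$ via the parabolic rescaling you perform together with a uniform bound on the oscillatory integral $I(a,b)$ (this is Faminskii's lemma), then runs $TT^*$ with Hardy--Littlewood--Sobolev at the $\theta=1$ endpoint, Stein's analytic interpolation in the family $D_x^z W(t)$ against the trivial $L^\infty_t L^2$ bound at $\theta=0$, and duality for the remaining lines. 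Your localization of where $\epsilon < \tfrac{1}{2}$ enters — controlling the amplitude $|\xi|^{\epsilon - 1/2}$ of the reduced one-dimensional integral near the degenerate stationary points, i.e.\ near the lines $\mu = \pm\sqrt{3}\,\xi$ where $\det \operatorname{Hess} \omega$ vanishes — is also correct and consistent with the references. Like the paper itself, you defer the one substantive step, $\sup_{a,b} |I(a,b)| < \infty$, to \cite{LS,LP09}; your reduction to it (Fresnel integration in $\mu$, reduced phase $\Phi(\xi) = a\xi + \xi^3 - b^2/(4\xi)$, van der Corput of order two or three) is exactly how it is carried out there. One technicality you gloss over: in the analytic interpolation one must bound $D_x^{\epsilon/2 + i\beta} W$ with constants growing at most polynomially in $\beta$ (since $D_x^{i\beta}$ is not bounded on $L^\infty$); this is why the oscillatory-integral lemma in the references is stated for complex powers.

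One concrete caveat about your ``usual Strichartz bookkeeping.'' Duality applied to the first line yields the third line with $D_x^{\theta\epsilon/2}$, not $D_x^{\theta\epsilon}$; the full power $\theta\epsilon$ is correct only in the second line, where it arises as half a power on each factor of the $TT^*$ composition. In fact the third line \emph{as printed} is inconsistent with scaling: testing $g_\lambda(t,x,y) = g(\lambda^3 t, \lambda x, \lambda y)$, the left-hand side with $D_x^{\sigma}$ scales as $\lambda^{\sigma - 4}$ while the right-hand side scales as $\lambda^{-4 + \theta\epsilon/2}$ (using $3/q = \theta(2+\epsilon)/2$ and $2/p = 1-\theta$), which forces $\sigma = \theta\epsilon/2$. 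So your scheme proves the scale-correct half-power version, and the statement contains a typo (apparently inherited from the quotation). This discrepancy is immaterial for the rest of the paper, which only uses the first line of \eqref{Strichartz} (through \eqref{embedding} and \eqref{embedding2}), but you should not claim the printed third line follows from your duality argument as stated.
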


In particular, the first estimate yields an embedding in the context of Bourgain spaces:

\begin{coro}[{\cite[Corollary 3.2]{MP}}]
There hold
\begin{align} \label{embedding}
\| u \|_{L^4_{txy}} \lesssim \| u \|_{X^{0,\frac{5}{12}^+}},
\end{align}
\end{coro}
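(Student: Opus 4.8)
The plan is to deduce the $L^4_{txy}$ bound from the homogeneous Strichartz estimate by combining a transfer principle with complex interpolation. The difficulty is that the target index $b=\frac{5}{12}$ lies \emph{below} $\frac12$, so one cannot simply feed a diagonal $p=q=4$ Strichartz estimate into the transfer principle, which only converts a homogeneous bound $\|W(t)f\|_Y\lesssim\|f\|_{L^2}$ into $\|u\|_Y\lesssim\|u\|_{X^{0,b}}$ for $b>\frac12$. Moreover, with $\epsilon=0$ the Strichartz proposition forces $p=\frac{2}{1-\theta}$ and $q=\frac{3}{\theta}$, and the diagonal $p=q=4$ is unreachable on this line. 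So instead I would first establish a \emph{stronger} space-time integrability estimate with $b$ just above $\frac12$, and then interpolate it down against the trivial $L^2$ identity, trading integrability for a smaller value of $b$.

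Concretely, I would first choose $\epsilon=0$ and $\theta=\frac35$ in \eqref{Strichartz}; this gives $p=\frac{2}{1-\theta}=5$ and $q=\frac{6}{\theta(2+\epsilon)}=5$, and since the derivative index $\frac{\epsilon\theta}{2}$ then vanishes, the first line of \eqref{Strichartz} reads $\|W(t)f\|_{L^5_{txy}}\lesssim\|f\|_{L^2}$. Next I would upgrade this homogeneous bound to the Bourgain space via the standard transfer principle: writing $g(t)=W(-t)u(t)$, so that $\widehat{g}(\tau,\xi,\mu)=\widehat{u}(\tau+\omega(\xi,\mu),\xi,\mu)$ and hence $\|u\|_{X^{0,b}}=\|\langle D_t\rangle^b g\|_{L^2}$, and decomposing $g(t)=\int e^{it\tau}g_\tau\,d\tau$ in the time frequency so that $u(t)=\int e^{it\tau}W(t)g_\tau\,d\tau$, Minkowski's inequality and the homogeneous estimate give $\|u\|_{L^5_{txy}}\le\int\|W(t)g_\tau\|_{L^5_{txy}}\,d\tau\lesssim\int\|g_\tau\|_{L^2}\,d\tau$; a Cauchy--Schwarz in $\tau$ against the weight $\langle\tau\rangle^{\pm b}$, whose $\langle\tau\rangle^{-2b}$ factor is integrable exactly when $b>\frac12$, then produces
\begin{equation*}
\|u\|_{L^5_{txy}}\lesssim\|u\|_{X^{0,\frac12^+}}.
\end{equation*}

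Finally I would interpolate. This last bound is the boundedness of the inclusion $X^{0,\frac12^+}\to L^5_{txy}$, while Plancherel gives the isometry $X^{0,0}=L^2_{txy}$, i.e.\ $X^{0,0}\to L^2_{txy}$. Since weighted $L^2$ spaces interpolate in the weight exponent, $[X^{0,0},X^{0,\frac12^+}]_\mu=X^{0,(\frac{\mu}{2})^+}$, and $[L^2_{txy},L^5_{txy}]_\mu=L^{r_\mu}_{txy}$ with $\frac{1}{r_\mu}=\frac{1-\mu}{2}+\frac{\mu}{5}$. Choosing $\mu=\frac56$ gives $r_\mu=4$ and $\frac{\mu}{2}=\frac{5}{12}$, so the interpolated inclusion $X^{0,\frac{5}{12}^+}\to L^4_{txy}$ is bounded, which is exactly \eqref{embedding}.

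The main obstacle is the bookkeeping around the sub-$\frac12$ index: one must recognize that the transfer principle cannot be applied at the desired endpoint, and then locate the correct intermediate Lebesgue exponent — here $L^5_{txy}$, reachable on the Strichartz line precisely because $\theta=\frac35$ lands $p=q=5$ — so that a single interpolation simultaneously produces the exponent $4$ in space-time and drops $b$ from $\frac12^+$ to $\frac{5}{12}^+$. Verifying the three arithmetic constraints (Strichartz admissibility, the Lebesgue interpolation identity, and the matching $b=\frac{\mu}{2}$) is the crux; the transfer principle and the interpolation itself are routine.
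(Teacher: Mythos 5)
Your proposal is correct and takes essentially the intended route: the paper gives no proof of its own beyond remarking that \eqref{embedding} follows from the first Strichartz estimate in \eqref{Strichartz}, and the proof in the cited source \cite[Corollary 3.2]{MP} is exactly your argument — the diagonal Strichartz bound $\| W(t) f \|_{L^5_{txy}} \lesssim \| f \|_{L^2}$ obtained at $\theta = \frac{3}{5}$, $\epsilon = 0$, upgraded by the transfer principle to $\| u \|_{L^5_{txy}} \lesssim \| u \|_{X^{0,\frac{1}{2}^+}}$, then complex-interpolated with the Plancherel identity $X^{0,0} = L^2_{txy}$ at parameter $\mu = \frac{5}{6}$ to reach $L^4_{txy}$ with $b = \frac{5}{12}^+$. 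Your arithmetic (admissibility of $(5,5)$, $\frac{1}{4} = \frac{1-\mu}{2} + \frac{\mu}{5}$, and $b = \frac{\mu}{2}$) checks out, as does your observation that the diagonal $p=q=4$ is unreachable on the $\epsilon=0$ Strichartz line, which is what forces the detour through $L^5$.
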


Furthermore, \eqref{Strichartz} (up to the use of Cauchy-Schwarz inequality) gives us an other embedding: if  $u \in X^{s,\frac{1}{2}^+}$, then uniformly in $t \in \mathbb R$,
\begin{align}\label{embedding2}
\| u (t) \|_{H^s} \lesssim \| u \|_{X^{s,\frac{1}{2}^+}}.
\end{align}

We now focus on the terms of the Duhamel formula (\ref{Psi_1}). We give the following two linear estimates in Bourgain spaces: the first estimate is useful when dealing with the linear term; the second for the Duhamel term.

\begin{lemm}[{\cite[Lemmas 3.1 and 3.2]{G}}]
\label{lemma5}
Fix $T \le 1$.

Let $b \ge 0$ and $f\in H^s(\mathbb{R}^2)$. Then:
\begin{align}\label{inclusion}
\| \phi_T(t) W(t) f\|_{X^{s,b}} \lesssim T^{\frac{1}{2}-b}\| f \|_{H^s}.
\end{align}

Let $- \frac{1}{2} < b' < 0< b \le 1+b'$, and $g\in X^{s,b'}$. Then
\begin{align}\label{autre_inclusion}
\left\| \phi_T(t) \int_0^t W(t-t')g(t')dt' \right\|_{X^{s,b}} \lesssim T^{1-b+b'} \| g \|_{X^{s,b'}}.
\end{align}
\end{lemm}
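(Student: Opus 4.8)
The plan is to reduce both estimates to one-dimensional estimates in the time variable. Since $S(D)^s$ is a Fourier multiplier in the space variables with symbol $\langle|(\xi,\mu)|\rangle^s$, it commutes with $W(t)$, with multiplication by $\phi_T(t)$, and with the retarded Duhamel operator; applying it to both sides replaces $\|f\|_{H^s}$ by $\|S(D)^s f\|_{L^2}$ and $\|g\|_{X^{s,b'}}$ by $\|S(D)^s g\|_{X^{0,b'}}$, so it suffices to treat the case $s=0$ and to work frequency by frequency in $(\xi,\mu)$. For fixed $(\xi,\mu)$ write $\omega=\omega(\xi,\mu)$; the operator $W(t)$ acts on the spatial Fourier transform as multiplication by $e^{it\omega}$, which in the time-Fourier variable is the shift $\tau\mapsto\tau-\omega$. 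Consequently the weight $\langle\tau-\omega\rangle^b$ becomes $\langle\tau\rangle^b$ after this shift, and both estimates reduce to purely temporal $H^b_t$ bounds.

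For the first estimate, the time-Fourier transform of $\phi_T(t)W(t)f$ at the spatial frequency $(\xi,\mu)$ is $\widehat{\phi_T}(\tau-\omega)\,\fxy{f}(\xi,\mu)$. After the shift just described, the weighted $L^2_\tau$ norm of this factor is exactly $\|\phi_T\|_{H^b_t}\,|\fxy{f}(\xi,\mu)|$; integrating over $(\xi,\mu)$ gives $\|\phi_T(t)W(t)f\|_{X^{0,b}}=\|\phi_T\|_{H^b_t}\|f\|_{L^2}$. It then remains to prove the scaling bound $\|\phi_T\|_{H^b_t}\lesssim T^{\frac12-b}$ for $0<T\le1$ and $b\ge0$. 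This follows from $\widehat{\phi_T}(\tau)=T\,\widehat{\phi}(T\tau)$ and the change of variables $\sigma=T\tau$, which yields $\|\phi_T\|_{H^b}^2=T\int\langle\sigma/T\rangle^{2b}|\widehat{\phi}(\sigma)|^2\,d\sigma$; using $\langle\sigma/T\rangle\le T^{-1}\langle\sigma\rangle$ for $T\le1$ together with the Schwartz decay of $\widehat{\phi}$ gives the claim.

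For the second estimate, set $G(t):=e^{-it\omega}\fxy{g}(t,\xi,\mu)$, so that the spatial Fourier transform of $\int_0^t W(t-t')g(t')\,dt'$ equals $e^{it\omega}\int_0^t G(t')\,dt'$. Removing the factor $e^{it\omega}$ by the frequency shift reduces the estimate, for each $(\xi,\mu)$, to the one-dimensional bound
\[
\left\|\phi_T(t)\int_0^t G(t')\,dt'\right\|_{H^b_t}\lesssim T^{1-b+b'}\,\|G\|_{H^{b'}_t},
\]
after which one squares, integrates over $(\xi,\mu)$, and recognises the $X^{0,b}$ and $X^{0,b'}$ norms. This one-dimensional inequality is the heart of the matter and where I expect the main difficulty: the antiderivative $\int_0^t$ is not a Fourier multiplier, since it formally corresponds to dividing by $i\tau$, which is singular at $\tau=0$, and the truncation $\phi_T$ simultaneously regularises this singularity and supplies the power of $T$. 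The standard way to handle this (following Ginibre--Ozawa--Velo and Kenig--Ponce--Vega) is a Littlewood--Paley decomposition of $G$ in the time frequency $\tau$: for the high-frequency part $\langle\tau\rangle\gtrsim1$ one uses the multiplier $1/(i\tau)$ directly, gaining a derivative, while for the low-frequency part one Taylor-expands the primitive and exploits the smoothness and compact support of $\phi_T$ to extract $T^{1-b+b'}$. The constraints $-\tfrac12<b'<0<b\le1+b'$ enter precisely here: $b'>-\tfrac12$ guarantees local integrability in the resonant region, $b>0$ controls the loss coming from the cutoff, and $b\le1+b'$ ensures $1-b+b'\ge0$ so that the power of $T\le1$ is genuinely a gain. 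Combining the two estimates completes the proof.
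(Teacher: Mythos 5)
Your reduction to the fiberwise one-dimensional estimate and your proof of \eqref{inclusion} are correct and coincide with the paper's argument (separation of variables in space-time, the frequency shift $\tau\mapsto\tau-\omega$, and the scaling bound $\|\phi_T\|_{H^b_t}\lesssim T^{\frac12-b}$). The gap is in \eqref{autre_inclusion}: the one-dimensional inequality $\bigl\|\phi_T\int_0^t G\bigr\|_{H^b_t}\lesssim T^{1-b+b'}\|G\|_{H^{b'}_t}$, which you correctly identify as the heart of the matter, is only sketched, and the sketch as stated would fail. You propose to split at the $T$-independent threshold $\langle\tau\rangle\gtrsim 1$; but the cancellation between $e^{it\tau}$ and $1$ in the kernel $(e^{it\tau}-1)/(i\tau)$ remains essential throughout the range $1\le|\tau|\le 1/T$ (where $|t\tau|\lesssim 1$ on the support of $\phi_T$), and discarding it there is too lossy. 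Concretely, the non-oscillatory high-frequency piece $\phi_T(t)\int_{|\tau|\ge1}\widehat{G}(\tau)/(i\tau)\,d\tau$ can only be bounded by $\|\phi_T\|_{H^b}\bigl(\int_{|\tau|\ge1}|\tau|^{-2}\langle\tau\rangle^{-2b'}d\tau\bigr)^{1/2}\|G\|_{H^{b'}}\sim T^{\frac12-b}\|G\|_{H^{b'}}$, and since $b'>-\frac12$ one has $\frac12-b<1-b+b'$, so for $T\le1$ this is strictly weaker than the claimed bound (and it is sharp for your decomposition: test with $\widehat{G}$ a bump at $|\tau|\sim 2$, using $\|\phi_T\|_{\dot H^b}=T^{\frac12-b}\|\phi\|_{\dot H^b}$). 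The correct split — the one the paper uses, following Ginibre — is at $T|\tau|\lessgtr 1$ (i.e.\ $T|\tau-\omega|\lessgtr 1$ before the shift): then the Taylor expansion handles all of $|\tau|\le 1/T$, and on $|\tau|\ge 1/T$ the tail integral $\bigl(\int_{T|\tau|\ge1}|\tau|^{-2-2b'}d\tau\bigr)^{1/2}\sim T^{\frac12+b'}$ combines with $\|\phi_T\|_{H^b}\lesssim T^{\frac12-b}$ to produce exactly $T^{1-b+b'}$.

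A second omission: even with the right threshold, the oscillatory piece $\phi_T(t)\int_{T|\tau|\ge1}e^{it\tau}\widehat{G}(\tau)/(i\tau)\,d\tau$ (the paper's term $III$) is a genuine product of $\phi_T$ with a function $J(t)$, not a separated-variables term, so it needs a product estimate in $H^b_t$: the paper runs a convolution bound on the Fourier side, splitting the weight $\langle\theta\rangle^{2b}$ between the two factors and using both $\|J\|_{H^b_t}\lesssim T^{1-b+b'}\|G\|_{H^{b'}_t}$ and $\|J\|_{L^2_t}\lesssim T^{1+b'}\|G\|_{H^{b'}_t}$. Your phrase ``use the multiplier $1/(i\tau)$ directly'' does not cover this step. (A small slip in the motivation as well: the kernel $(e^{it\tau}-1)/(i\tau)$ is bounded near $\tau=0$, so $\phi_T$ is not what regularises that point; its role is to supply the powers of $T$ and the summability of the Taylor series on $T|\tau|\le1$.) In short, your framework — fiberwise reduction, Taylor expansion low, multiplier high — is the right family of ideas and matches the paper's, but the quantitative core (the threshold at $1/T$ and the estimation of the three resulting terms) is missing, and with the threshold you stated the argument cannot be completed.
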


\begin{proof}
A proof of these estimates in the context of the Korteweg-de Vries equation is done in the review article by Ginibre \cite[Lemma 3.1 and 3.2]{G}. For the convenience of the reader, we provide below a full proof for \eqref{ZK}.

The idea for estimate (\ref{inclusion}) is to separate the variables. In particular, we detect well the need of a fixed time interval.
\begin{align*}
\MoveEqLeft
\| \phi_T W(t) f \|_{X^{s,b}}^2 = \int_{\mathbb{R}^3} \langle \vert(\xi,\mu)\vert \rangle^{2s} \langle \tau-\omega(\xi, \mu) \rangle^{2b} \left\vert \int e^{-i(t\tau-t\omega+x\xi+y\mu)} \left( \phi_T f \right)dxdydt \right\vert^2 d\xi d\mu d\tau \\
	& =\int \langle \vert(\xi,\mu)\vert \rangle^{2s} \langle \tau' \rangle^{2b} \left\vert \ft{\phi_T}(\tau')\fxy{f}(\xi,\mu)\right\vert^2 d\xi d\mu d\tau  = \| \phi_T\|_{H^b_t} ^2 \| f \|_{H^s_{x,y}}^2.
\end{align*}
Recalling that $\langle a \rangle \lesssim  1+ \vert a \vert$, we obtain $\| \phi_T\|_{H_t^b}^2\le T^{1-2b} \|\phi\|_{H_t^b}^2$, and the first estimate is proved.

For estimate (\ref{autre_inclusion}): notice that the sum over $t$ in time is in fact a division by $\tau$ in frequency. Denote $\omega:=\omega(\xi,\mu)$, we split the domain whether $T|\tau - \omega| \lessgtr 1$, and obtain:
\begin{align*}
\MoveEqLeft
\fxy{\phi_T(t) \int_0^t W(t-t') g(t') dt'}= \phi_T(t) e^{it\omega} \int \frac{e^{it(\tau-\omega)}-1}{i(\tau-\omega)}\f{g}(\tau) d\tau \\
	& = \phi_T(t) e^{it\omega} \int_{T\vert \tau -\omega \vert \ge 1} \frac{i}{\tau-\omega} \f{g}(\tau)d\tau + \phi_T(t) e^{it\omega} \sum_{k\ge 1}\frac{t^k}{k!} \int_{T\vert \tau -\omega \vert \le 1} \left(i(\tau-\omega)\right)^{k-1} \f{g}(\tau)d\tau \\
		& \quad +\phi_T(t)e^{it\omega} \int_{T\vert \tau -\omega \vert \ge 1} \frac{e^{it(\tau-\omega)}}{i(\tau-\omega)}\f{g}(\tau)d\tau \\
		& = I+II+III.
\end{align*}
For $I$, we proceed as in the proof of (\ref{inclusion}). First, by separation of variables:
\begin{align*}
\ft{I}(\theta)= \ft{\phi_T}(\theta-\omega) \int_{T \vert \tau- \omega \vert \ge 1} \frac{i}{\tau-\omega} \f{g}(\tau) d\tau.
\end{align*}
Now, observe that $b' > -\frac{1}{2}$, and the change of variable $\theta\rightarrow \theta+\omega$ gives:
\begin{align*}
\MoveEqLeft 
\int \langle \vert (\xi,\mu)\vert \rangle^{2s} \langle \theta-\omega \rangle^{2b}\vert \ft{I}(\theta,\xi,\mu)\vert^2 d\theta d\xi d\mu \\
	& \le \|\phi_T\|_{H^b_t}^2 \int \langle \vert( \xi,\mu)\vert \rangle^{2s} \left(\int \langle \tau-\omega \rangle^{2b'} \vert \f{g} \vert^2 d\tau \right) \left( \int_{T\vert \tau-\omega \vert \ge 1} \frac{1}{(\tau-\omega)^{2+2b'}}d\tau \right)d\xi d\mu \\
	& \lesssim T^{1-2b}\| g \|_{X^{s,b'}}^2 T^{1+2b'} = \left( T^{1-b+b'} \| g\|_{X^{s,b'}}\right)^2.
\end{align*}
Similarly, we compute the time Fourier transform of the term $II$:
\begin{align*}
\ft{II}(\theta) = \sum_{k\ge 1} \frac{1}{k!} \ft{t^k \phi_T}(\theta-\omega) \int_{T\vert \tau-\omega \vert \le 1} \left( i (\tau-\omega)\right)^{k-1} \f{g}(\tau)d\tau.
\end{align*}
With similar techniques we used for the term $I$:
\begin{align*}
\MoveEqLeft 
\int \langle \vert (\xi,\mu)\vert \rangle^{2s} \langle \theta-\omega \rangle^{2b}\vert \ft{II}(\theta,\xi,\mu)\vert^2 d\theta d\xi d\mu \\
	& \le \sum_{k\ge 1} \frac{1}{k!} \|t^k \phi_T(t)\|_{H^b_t}^2 \int \langle \vert(\xi,\mu)\vert \rangle^{2s} \left(\int \langle \tau-\omega \rangle^{2b'} \vert \f{g}(\tau)\vert^2 d\tau \right) \left( \int_{T\vert \tau-\omega \vert \le 1} \frac{\vert \tau-\omega \vert^{2k-2}}{ \langle \tau-\omega \rangle^{2b'}}d\tau \right) d\xi d\mu \\
	& \lesssim \sum_{k\ge 1} \frac{1}{k!} T^{1-2b+2k} \| g \|_{X^{s,b'}}^2T^{1-2k+2b'} \le \left( T^{1-b+b'} \| g\|_{X^{s,b'}}\right)^2.
\end{align*}
We finally need to bound $III$, in which the time variable $t$ appears inside the integral on $\tau$. The integral :
\begin{align*}
J(t) :=\int_{T \vert \tau -\omega \vert \ge 1}  \frac{e^{it(\tau-\omega)}}{i(\tau-\omega)}\f{g}(\tau) d\tau
\end{align*} can be seen as the inverse of a Fourier transform:
\begin{align*}
\int \langle \theta-\omega \rangle^{2b} \left\vert \int e^{-it \theta}J(t) dt\right\vert ^2 d\theta = \int_{T \vert \theta-\omega\vert \ge 1} \frac{\langle \theta -\omega \rangle^{2b}}{(\theta-\omega)^2} \vert \f{g}(\theta-\omega)\vert^2 d\theta  \lesssim \left( T^{1-b+b'} \|\f{g}(\xi,\mu)\|_{H^{b'}_t} \right)^2.
\end{align*}

Similarly, for the $L^2$-norm:
\begin{align*}
\int \left\vert \int e^{-it \theta} J(t) dt\right\vert ^2 d\theta \lesssim \left( T^{1+b'} \|\f{g}(\xi,\mu)\|_{H^{b'}_t} \right)^2.
\end{align*}
We can now compute the norm of the term $III$:
\begin{align*}
\MoveEqLeft
\int \langle \vert (\xi,\mu)\vert \rangle^{2s} \langle \theta-\omega \rangle^{2b}\vert \ft{III}(\theta,\xi,\mu)\vert^2 d\theta d\xi d\mu \\
	& =\int \langle \vert (\xi,\mu) \vert \rangle^{2s} \left( \int \langle \theta-\omega \rangle^{2b} \vert \ft{e^{it\omega}\phi_T(t)} * \ft{J}(\theta) \vert^2 d\theta \right) d\xi d\mu \\
	& \lesssim \| \langle  \tau \rangle^{2b} \ft{\phi_T} \|_{L^1_t} \| J\|_{L_t^2H^s}+ \| \ft{\phi_T} \|_{L^1_t} \| J\|_{X^{s,b}} \lesssim \left( T^{1-b+b'} \| g \|_{X^{s,b'}}\right)^2. \qedhere
\end{align*}

\end{proof}

Observe that $X^{s,b}$ are embedded in one another as $b$ decreases, even after truncation in time.

\begin{lemm} 
For all $b'<b$ satisfying $0 < b-b' < \frac{1}{2}$, and any function $u$ in $X^{s,b}$:
\begin{align}\label{differentb}
\| \phi_T(t) u \|_{X^{s,b'}} \lesssim T^{b-b'} \| \phi_T(t) u \|_{X^{s,b}}.
\end{align}
\end{lemm}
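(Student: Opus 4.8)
The plan is to reduce \eqref{differentb} to a scalar estimate in the time variable, and then to exploit that $v:=\phi_T u$ is supported in $\operatorname{supp}\phi_T\subset[-T,2T]$, i.e.\ in a time interval of length $\le 3T$. The spatial operator $S(D)^s$ commutes with multiplication by the time cut-off $\phi_T$; moreover, for each fixed spatial frequency $(\xi,\mu)$ the modulation $h(t)\mapsto e^{-i\omega(\xi,\mu)t}h(t)$ replaces the weight $\langle\tau-\omega(\xi,\mu)\rangle$ by $\langle\tau\rangle$ and leaves the $t$-support unchanged. Hence, setting $g_{\xi,\mu}(t):=e^{-i\omega(\xi,\mu)t}\fxy{v}(t,\xi,\mu)$, we have
\[
\|v\|_{X^{s,b}}^2=\int_{\mathbb R^2}\langle|(\xi,\mu)|\rangle^{2s}\,\|g_{\xi,\mu}\|_{H^b_t}^2\,d\xi\,d\mu,
\]
and each $g_{\xi,\mu}$ is supported in $[-T,2T]$. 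Thus it suffices to prove the following scalar statement: for $T\le1$ and any $g\in H^b(\mathbb R)$ supported in an interval of length $\le3T$, one has $\|g\|_{H^{b'}}\lesssim T^{b-b'}\|g\|_{H^b}$; squaring this, multiplying by $\langle|(\xi,\mu)|\rangle^{2s}$ and integrating in $(\xi,\mu)$ then gives \eqref{differentb}.

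To prove the scalar estimate I split the frequency line into the high region $\{|\tau|>1/T\}$ and the low region $\{|\tau|\le1/T\}$. On the high region the gain is mere weight algebra: since $b'-b<0$ and $\langle\tau\rangle\ge1/T$,
\[
\int_{|\tau|>1/T}\langle\tau\rangle^{2b'}|\ft{g}(\tau)|^2\,d\tau=\int_{|\tau|>1/T}\langle\tau\rangle^{2(b'-b)}\langle\tau\rangle^{2b}|\ft{g}(\tau)|^2\,d\tau\le T^{2(b-b')}\|g\|_{H^b}^2.
\]
On the low region I use the compact support through an uncertainty principle. By Cauchy--Schwarz on the support, $\|\ft{g}\|_{L^\infty}\lesssim T^{1/2}\|g\|_{L^2}$, whence, for $b'>-\tfrac12$ (so that $\int_{|\tau|\le1/T}\langle\tau\rangle^{2b'}\,d\tau\lesssim T^{-1-2b'}$),
\[
\int_{|\tau|\le1/T}\langle\tau\rangle^{2b'}|\ft{g}(\tau)|^2\,d\tau\le\|\ft{g}\|_{L^\infty}^2\int_{|\tau|\le1/T}\langle\tau\rangle^{2b'}\,d\tau\lesssim T\|g\|_{L^2}^2\cdot T^{-1-2b'}=T^{-2b'}\|g\|_{L^2}^2.
\]
The surplus factor $T^{-2b'}$ is absorbed by the fractional Poincaré inequality $\|g\|_{L^2}\lesssim T^{b}\|g\|_{\dot H^b}$, valid for $g$ supported in a length-$\lesssim T$ interval and $b>0$: splitting $\|g\|_{L^2}^2=\int_{|\tau|\le R}+\int_{|\tau|>R}$, the first integral is $\le2R\|\ft{g}\|_{L^\infty}^2\lesssim RT\|g\|_{L^2}^2$ while the second is $\le R^{-2b}\|g\|_{\dot H^b}^2$, and choosing $R\sim1/T$ absorbs the first term and yields $\|g\|_{L^2}^2\lesssim T^{2b}\|g\|_{\dot H^b}^2\le T^{2b}\|g\|_{H^b}^2$. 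Combining the two regions gives the scalar estimate.

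The main difficulty is the low-frequency estimate: the trivial bound $\|g\|_{H^{b'}}\le\|g\|_{H^b}$ carries no power of $T$, so the whole gain $T^{b-b'}$ must be extracted from the localization of $g$ in a short time interval, and it is the combination of the $L^\infty$-control of $\ft{g}$ with the Poincaré inequality that converts this localization into the factor $T^{b-b'}$. The computation requires $b>0$ and $b'>-\tfrac12$, the latter being exactly the threshold making $\langle\tau\rangle^{2b'}$ integrable at the cut-off scale (indeed the estimate fails for $b'\le-\tfrac12$, as one checks on a modulated bump of width $T$); this is the standard range in which such Bourgain-space localization estimates hold, and it contains the regime relevant to the applications. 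The remaining case $b\le0$ (hence $b'<b\le0$, still with $b'>-\tfrac12$) follows by duality: testing against $\psi\in H^{-b'}$ and inserting a fattened cut-off $\chi_I$ equal to $1$ on $\operatorname{supp}g$ and supported in a length-$\lesssim T$ interval, one applies the already-proven estimate to $\chi_I\psi$ (whose indices $-b<-b'$ lie in the good range) together with the boundedness of multiplication by $\chi_I$ on $H^{-b'}$ for $-b'<\tfrac12$.
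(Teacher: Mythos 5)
Your proof is correct, and after the common first step it takes a genuinely different route from the paper's. Both arguments begin with the same reduction: conjugating by the linear flow (your modulation $g_{\xi,\mu}(t)=e^{-i\omega(\xi,\mu)t}\fxy{\phi_T u}(t,\xi,\mu)$ is exactly the paper's passage to $W(t)\fxy{\phi_T u}$), which turns the weight $\langle\tau-\omega\rangle$ into $\langle\tau\rangle$ and reduces \eqref{differentb} to a one-dimensional estimate in $t$ for functions supported in $[-T,2T]$. From there the paper works on the physical side: H\"older in time over the support interval with exponents $p=\frac{1}{1+2(b'-b)}$, $p'=\frac{1}{2(b-b')}$ (this is where the hypothesis $b-b'<\frac12$ enters, via the Sobolev embedding $\dot H^{b-b'}_t\hookrightarrow L^{2p}_t$), followed by the Gagliardo--Nirenberg--Sobolev step $\|v\|_{L^{2p}_t}\lesssim\|D_t^{1/(2p')}v\|_{L^2_t}$. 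You instead work on the frequency side: a high/low splitting at the uncertainty scale $|\tau|\sim 1/T$, with the gain on high frequencies coming from pure weight algebra and on low frequencies from the Cauchy--Schwarz bound $\|\ft{g}\|_{L^\infty}\lesssim T^{1/2}\|g\|_{L^2}$ combined with a fractional Poincar\'e inequality, plus a duality step (with a fattened cutoff and the uniform $H^\sigma$-boundedness of rescaled cutoffs for $|\sigma|<\frac12$) to cover $b\le 0$. Each approach buys something: the paper's is shorter; yours is more elementary (nothing beyond Cauchy--Schwarz in the core estimate), needs no upper bound on $b-b'$ when $b>0$, and --- importantly --- makes visible the sharp threshold $b'>-\frac12$, together with the bump counterexample showing failure at and below it. This is a real point in your favor: the lemma as stated imposes no lower bound on $b'$, yet as your counterexample shows it is false for $b'\le-\frac12$ (a case the statement formally allows when $b<0$); relatedly, the paper's own H\"older step restricts the $t$-integral to $[-T,2T]$ \emph{after} applying the nonlocal operator $\langle D_t\rangle^{b'}$, which destroys compact support, so the written proof silently relies on exactly the kind of cutoff manipulation you carry out explicitly in your duality step. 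The only caveats in your argument are minor: your constants degenerate as $b'\downarrow-\frac12$ (the factor $(1+2b')^{-1}$ from $\int_{|\tau|\le 1/T}\langle\tau\rangle^{2b'}d\tau$), so the paper's claim that the implied constant is independent of $b,b'$ holds for your proof only on compact subranges of $b'>-\frac12$ --- harmless, since every application in the paper uses modulation indices of size about $\pm\bigl(\tfrac12-2\delta\bigr)$ and $\tfrac12+\delta$ with $\delta$ fixed, well inside the good range.
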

We emphasize that the implicit constants above do not depend on the parameters $b$, $b'$ and $T$.

\begin{proof} By choosing $p:= \frac{1}{1+2(b'-b)}$ and $p':= \frac{1}{2(b-b')}$:
\begin{align*}
\| \phi_T u \|_{X^{s,b'}}^2 
	& = \int \langle \vert ( \xi, \mu) \vert \rangle^{2s} \| \langle \tau - \omega \rangle^{b'}  \f{\phi_T u } \|_{L^2}^2 d(\xi,\mu) \\
	& = \int \langle \vert ( \xi, \mu) \vert \rangle^{2s} \int \langle \tau \rangle^{2b'} \vert \f{W(t)\phi_T u } \vert^2(\tau ) d \tau d(\xi,\mu) \\
	& = \int \langle \vert ( \xi, \mu) \vert \rangle^{2s} \int \vert \langle D_t \rangle ^{2b'} W(t)\fxy{\phi_T u} \vert^2 dt d (\xi,\mu) \\
	& \le \int \langle \vert ( \xi, \mu) \vert \rangle^{2s} \left( \int \vert \langle D_t \rangle ^{b'} W(t)\fxy{\phi_T u} \vert^{2p} dt\right)^{\frac{1}{p}} \left( \int_{-T \le t \le 2T} dt \right)^{\frac{1}{p'}} d (\xi,\mu).
\end{align*}

Then by a Gagliardo-Nirenberg-Sobolev embedding $\| v(t) \|_{L^{2p}_t} \le \|D_t^{\frac{1}{2p'}} v(t) \|_{L^2_t}$:
\begin{align*}
\| \phi_T u \|_{X^{s,b'}}^2 
	& \lesssim T^{2(b-b')} \int \langle \vert ( \xi, \mu) \vert \rangle^{2s} \int \vert \langle D_t \rangle ^{2b} W(t) \fxy{\phi_T u} \vert^{2} dt d(\xi,\mu) \lesssim T^{2(b-b')} \| \phi_Tu \|_{X^{s,b}}^2. \qedhere
\end{align*}
\end{proof}

In order to prove the bilinear estimates in the next section, we will make use of two preliminary lemmas already stated in Molinet Pilod \cite{MP}. The first one is the following:

\begin{lemm}[{\cite[Proposition 3.5]{MP}}] 
Consider the polynomial $K(x,y):= 3x^2- y^2$. For all $\phi \in L^2(\mathbb{R}^2)$,
\begin{align} \label{restriction_theorem2}
\left\| K(D) ^{1/8} e^{-t \partial_x \Delta} \phi \right\|_{L^4_{txy}} \lesssim \| \phi \|_{L^2_{xy}},
\end{align}
and for all $u \in X^{0,\frac{1}{2}^+}$,
\begin{align}\label{restriction_theorem}
\left\| K(D)^{1/8} e^{t \partial_x \Delta} u \right\|_{L^4} \lesssim \| u \|_{X^{0,\frac{1}{2}^+}}.
\end{align}
\end{lemm}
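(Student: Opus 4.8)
The plan is to reduce the space–time estimate \eqref{restriction_theorem} to the homogeneous estimate \eqref{restriction_theorem2} by the standard transference principle, and then to prove \eqref{restriction_theorem2} by a $TT^*$ (squaring) argument combined with an explicit change of variables on the resonance surface. Since $e^{t\partial_x\Delta}$ and $e^{-t\partial_x\Delta}$ differ only by complex conjugation and time reversal, the homogeneous bound holds verbatim for either flow, so I treat \eqref{restriction_theorem} as the assertion $\|K(D)^{1/8} u\|_{L^4}\lesssim\|u\|_{X^{0,1/2^+}}$ for space–time $u$.

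To pass from \eqref{restriction_theorem2} to \eqref{restriction_theorem}, I would decompose a general $u$ over modulations: setting $\fxy{\psi_\sigma}(\xi,\eta) := \f{u}(\sigma+\omega(\xi,\eta),\xi,\eta)$, one has $u(t) = \int e^{it\sigma} W(t)\psi_\sigma \, d\sigma$. Applying Minkowski's inequality in $L^4_{txy}$, then \eqref{restriction_theorem2} to each free evolution $K(D)^{1/8} W(t)\psi_\sigma$, and finally Cauchy--Schwarz in $\sigma$ against the weight $\langle\sigma\rangle^{1/2^+}$ gives the result. This is exactly the mechanism by which \eqref{embedding2} is deduced from \eqref{Strichartz}, so only \eqref{restriction_theorem2} is substantial.

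For \eqref{restriction_theorem2}, set $v := K(D)^{1/8} e^{-t\partial_x\Delta}\phi$, whose space–time Fourier transform is $|3\xi^2-\eta^2|^{1/8}\,\delta(\tau-\omega(\xi,\eta))\,\fxy{\phi}(\xi,\eta)$. Then $\|v\|_{L^4}^2 = \|v^2\|_{L^2}$, and by Plancherel
\[
\f{v^2}(\tau,\xi,\eta) = \int \delta(\tau-\omega_1-\omega_2)\,|K_1|^{1/8}|K_2|^{1/8}\,\fxy{\phi}(\xi_1,\eta_1)\,\fxy{\phi}(\xi_2,\eta_2)\, d\xi_1 d\eta_1,
\]
where $(\xi_2,\eta_2)=(\xi-\xi_1,\eta-\eta_1)$, $\omega_j=\omega(\xi_j,\eta_j)$ and $K_j=3\xi_j^2-\eta_j^2$. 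For fixed $(\xi,\eta)$ I would change variables $(\xi_1,\eta_1)\mapsto(\omega_1,\omega_2)$, the key algebraic computation being that the Jacobian $J=\partial(\omega_1,\omega_2)/\partial(\xi_1,\eta_1)$ factorizes as $\det J = 2\,(\xi_2\eta_1-\xi_1\eta_2)\,(3\xi_1\xi_2-\eta_1\eta_2)$. Integrating out the delta in $\omega_2=\tau-\omega_1$ and applying Cauchy--Schwarz in $\omega_1$, the bound splits into the factor $\int |\fxy{\phi}(\xi_1,\eta_1)|^2|\fxy{\phi}(\xi_2,\eta_2)|^2 |\det J|^{-1}\,d\omega_1$, which integrates over $(\tau,\xi,\eta)$ to exactly $\|\phi\|_{L^2}^4$ upon undoing the change of variables, and the measure factor
\[
M(\tau,\xi,\eta) := \int \frac{|K_1|^{1/4}|K_2|^{1/4}}{2\,|\xi_2\eta_1-\xi_1\eta_2|\,|3\xi_1\xi_2-\eta_1\eta_2|}\, d\omega_1 \Big|_{\omega_2=\tau-\omega_1},
\]
so that it suffices to prove the uniform bound $\sup_{\tau,\xi,\eta} M(\tau,\xi,\eta)\lesssim 1$.

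The heart of the matter, and the step I expect to be the main obstacle, is this uniform estimate on $M$, where the weight $|K_1 K_2|^{1/4}$ must precisely offset the two ways $\det J$ degenerates, namely on $\{3\xi_1\xi_2=\eta_1\eta_2\}$ and on $\{\xi_2\eta_1=\xi_1\eta_2\}$. The essential algebraic identity to exploit is
\[
(3\xi_1\xi_2-\eta_1\eta_2)^2 = K_1 K_2 + 3\,(\xi_2\eta_1-\xi_1\eta_2)^2,
\]
which ties the two degeneracy loci to the numerator: when $K_1K_2\ge 0$ it yields $|3\xi_1\xi_2-\eta_1\eta_2|\ge |K_1 K_2|^{1/2}$, while in all cases $|3\xi_1\xi_2-\eta_1\eta_2|\ge\sqrt3\,|\xi_2\eta_1-\xi_1\eta_2|$. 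I would then split the $\omega_1$-integral into regions according to which factor is small (transverse versus nearly tangential interactions, and the sign of $K_1K_2$) and bound the resulting one-dimensional integral by a convergent one in each region; the exponent $1/8$ is tuned exactly so that the borderline singularities remain integrable uniformly in the frozen parameters. Controlling this noncompact curve integral uniformly in $(\tau,\xi,\eta)$ is the delicate technical point, and is where the anisotropy of \eqref{ZK} and the role of $K$ genuinely enter.
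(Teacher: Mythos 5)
Your reduction is sound as far as it goes, and it is genuinely different from the paper's argument: the paper disposes of both estimates in a few lines by quoting the affine-invariant $L^4$ restriction theorem of Carbery--Kenig--Ziesler \cite{CKZ} for homogeneous polynomial surfaces, after the one-line computation $\det \mathrm{Hess}\, \omega(\xi,\mu) = 12\xi^2 - 4\mu^2 = 4K(\xi,\mu)$, followed by duality and transference. Your transference step (foliating $u$ over modulations and using Cauchy--Schwarz against $\langle\sigma\rangle^{1/2^+}$) is the standard and correct way to get \eqref{restriction_theorem} from \eqref{restriction_theorem2}, the identity $\|v\|_{L^4}^2 = \|v^2\|_{L^2}$ is legitimate even for complex $v$ since $|v^2| = |v|^2$, your Jacobian factorization $\det J = 2(\xi_2\eta_1 - \xi_1\eta_2)(3\xi_1\xi_2 - \eta_1\eta_2)$ is correct, and so is the identity $(3\xi_1\xi_2 - \eta_1\eta_2)^2 = K_1K_2 + 3(\xi_2\eta_1 - \xi_1\eta_2)^2$ (I checked both). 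In effect you are proposing to reprove the relevant case of the CKZ theorem by hand via $TT^*$.

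The gap is that the uniform bound $\sup_{\tau,\xi,\eta} M(\tau,\xi,\eta) \lesssim 1$, which you defer with ``I would then split\dots and bound\dots'', \emph{is} the lemma: for this surface it is essentially equivalent to the restriction estimate itself, so the proposal as written reduces the statement to an unproven statement of the same depth. Your identity does give a real first step --- if $K_1K_2 > 0$ then $|3\xi_1\xi_2 - \eta_1\eta_2| \ge |K_1K_2|^{1/2}$, and if $K_1K_2 < 0$ then $\sqrt{3}\,|\xi_2\eta_1 - \xi_1\eta_2| \ge |K_1K_2|^{1/2}$, so in either sign regime one factor of $\det J$ absorbs $|K_1K_2|^{1/2}$ and one is left with $\int |K_1K_2|^{-1/4}\,|\text{remaining factor}|^{-1}\, d\omega_1$ --- but this residual one-dimensional integral along the noncompact level curve $\{\omega_1 + \omega_2 = \tau\}$ is still singular where the remaining factor or $K_1K_2$ vanishes, and bounding it uniformly in the frozen parameters requires a further change of variable (e.g.\ from $\omega_1$ to the vanishing factor, with a lower bound on its derivative along the curve) that is nowhere carried out; verifying that the exponent $1/8$ makes every borderline case integrable is precisely the technical content you have skipped. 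A second, smaller omission: the map $(\xi_1,\eta_1) \mapsto (\omega_1,\omega_2)$ is not injective and degenerates exactly on $\{\det J = 0\}$, so the claim that undoing the change of variables returns exactly $\|\phi\|_{L^2}^4$ requires decomposing into finitely many branches of injectivity (Bézout gives a uniformly bounded multiplicity for this polynomial system, but this must be stated and used). As it stands, then, you have a correct and instructive reduction together with an honest acknowledgment that the key estimate is missing --- whereas the paper's proof is complete precisely because it outsources that estimate to \cite{CKZ}.
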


\begin{proof}
This result is in fact a direct consequence of the following optimal $L^4$ restriction estimate for homogenous polynomial hypersurface of degree $d\ge 2$ in $\mathbb{R}^3$ proved by Carbery, Kenig and Zisler \cite{CKZ}, which goes as follows. 

Let $\Omega \in \mathbb R[X,Y]$ be a homogeneous polynomial of degree $d \ge 2$ and let $\Gamma(\xi, \mu) = (\xi, \mu \Omega(\xi,\mu))$. Denote $K_\Omega(\xi, \mu) = |\det \mathop{\text{Hess}} \Omega(\xi, \mu)|$. Then there exist a constant $C>0$ such that
\begin{equation} \label{est:restriction_CKZ}
\forall f \in L^{4/3}(\mathbb R^3), \quad \left( \int_{\mathbb R^2} | \hat f(\Gamma(\xi,\mu))|^2 K_\Omega(\xi, \mu)^{1/4} d\xi d\mu \right)^{1/2} \le C \| f \|_{L^{4/3}}.
\end{equation}
The symbol associated to $e^{-t \partial_x \Delta}$ is $\omega(\xi,\mu) = \xi \left( \xi^2 + \mu^2 \right)$, whose Hessian is
\[ \det \mathop{\text{Hess}} \omega(\xi, \mu) = 12 \xi^2 - 4 \mu^2 = 4 K(\xi, \mu). \]
We can then apply \eqref{est:restriction_CKZ} to $K$, and by a direct duality argument, we derive \eqref{restriction_theorem2} and \eqref{restriction_theorem}.
\end{proof}

One can view this result as a $1/4$ gain of derivative when compared to the Strichartz estimate \eqref{Strichartz} (we refer to \cite[Remark 3.1]{MP} for further details).

The second lemma reveals where the gain of regularity occurs in the dispersion relation of \eqref{ZK}. It is specific to dimension 3, and so well suited to the study in Bourgain spaces of \eqref{ZK}  in 2 space dimensions. We recall that the truncation operators $P_N$ and $Q_L$ were defined in \eqref{def_PQ}.

\begin{lemm}[{\cite[Proposition 3.6]{MP}}]\label{lemma7}
Let $N_1$, $N_2$, $L_1$ and $L_2$ be four integers involved in the operators $P_N$ and $Q_L$, and two functions $u_1$ and $v_2$. Then
\begin{align}\label{measure}
\left\| \left( P_{N_1} Q_{L_1} u_1 \right) \left( P_{N_2} Q_{L_2} v_2 \right) \right\|_{L^2} \lesssim \max(L_1, L_2)^\frac{1}{2} \max(N_1,N_2) \left\| P_{N_1} Q_{L^1} u_1 \right\|_{L^2} \left\| P_{N_2} Q_{L^2} v_2 \right\|_{L^2}.
\end{align}
If furthermore $N_2 \ge 4N_1$ or $N_1 \ge 4N_2$, then
\begin{align}\label{measure2}
\left\| \left( P_{N_1} Q_{L_1} u_1 \right) \left( P_{N_2} Q_{L_2} v_2 \right) \right\|_{L^2} \lesssim \frac{\max(N_1,N_2)^\frac{1}{2}}{\min(N_1,N_2)} (L_1L_2)^\frac{1}{2} \left\| P_{N_1} Q_{L_1} u_1 \right\|_{L^2} \left\|P_{N_2} Q_{L^2} v_2 \right\|_{L^2}.
\end{align}
\end{lemm}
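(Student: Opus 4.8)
The plan is to reduce both estimates to a single pointwise bound on the measure of a resonance set, via the standard bilinear $L^2$ estimate on the Fourier side. Writing $g_1 = P_{N_1}Q_{L_1}u_1$ and $g_2 = P_{N_2}Q_{L_2}v_2$, Plancherel in the space-time variables gives $\|g_1 g_2\|_{L^2} = \|\f{g_1} * \f{g_2}\|_{L^2}$, and replacing $\f{g_i}$ by $|\f{g_i}| =: f_i \ge 0$ (which leaves the $L^2$ norms unchanged and only enlarges the convolution) reduces matters to bounding $\|f_1 * f_2\|_{L^2}$. Each $f_i$ is supported on the set $R_i$ where $|(\xi_i,\mu_i)| \sim N_i$ and $\langle \tau_i - \omega(\xi_i,\mu_i)\rangle \sim L_i$, and $\|f_i\|_{L^2} = \|g_i\|_{L^2}$. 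For fixed output frequency $\zeta = (\tau,\xi,\mu)$, Cauchy--Schwarz in the convolution variable $\zeta_1 = (\tau_1,\xi_1,\mu_1)$ yields $(f_1 * f_2)(\zeta)^2 \le |E_\zeta| \int f_1(\zeta_1)^2 f_2(\zeta - \zeta_1)^2\, d\zeta_1$, where $E_\zeta = \{\zeta_1 \in R_1 : \zeta - \zeta_1 \in R_2\}$; integrating in $\zeta$ and applying Fubini then gives the master bound $\|g_1 g_2\|_{L^2} \le (\sup_\zeta |E_\zeta|)^{1/2} \|g_1\|_{L^2}\|g_2\|_{L^2}$. Everything thus comes down to estimating $\sup_\zeta |E_\zeta|$.

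The next step is to integrate out $\tau_1$. Setting $\sigma_1 = \tau_1 - \omega(\xi_1,\mu_1)$ and $\sigma_2 = (\tau-\tau_1) - \omega(\xi-\xi_1,\mu-\mu_1)$, one has $\sigma_1 + \sigma_2 = H(\xi_1,\mu_1)$, where $H := \tau - \omega(\xi_1,\mu_1) - \omega(\xi - \xi_1, \mu - \mu_1)$ is independent of $\tau_1$. Hence, at fixed $(\xi_1,\mu_1)$, the conditions $\langle \sigma_1\rangle \sim L_1$ and $\langle \sigma_2\rangle \sim L_2$ confine $\tau_1$ to an interval of length $\lesssim \min(L_1,L_2)$, and are compatible only if $|H| \lesssim \max(L_1,L_2)$. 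Therefore $|E_\zeta| \lesssim \min(L_1,L_2)\, |S_\zeta|$, where $S_\zeta = \{(\xi_1,\mu_1): |(\xi_1,\mu_1)|\sim N_1,\ |(\xi-\xi_1,\mu-\mu_1)|\sim N_2,\ |H|\lesssim \max(L_1,L_2)\}$.

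For \eqref{measure} I would simply bound $S_\zeta$ by the area of the smaller of the two frequency annuli, $|S_\zeta| \lesssim \min(N_1,N_2)^2 \le \max(N_1,N_2)^2$; combined with $\min(L_1,L_2) \le \max(L_1,L_2)$ this gives $|E_\zeta| \lesssim \max(L_1,L_2)\max(N_1,N_2)^2$, which is \eqref{measure} after taking square roots. The content of \eqref{measure2} is to exploit the frequency separation to show the resonance constraint genuinely shrinks $S_\zeta$. The key algebraic observation is that, because the anisotropic norm satisfies $|(\xi,\mu)|^2 = 3\xi^2 + \mu^2$, the partial derivative of the resonance function factors cleanly as $\partial_{\xi_1} H = |(\xi-\xi_1,\mu-\mu_1)|^2 - |(\xi_1,\mu_1)|^2$. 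When $N_1 \ge 4N_2$ (the case $N_2 \ge 4 N_1$ being symmetric), this forces $|\partial_{\xi_1}H| \gtrsim \max(N_1,N_2)^2$ with a fixed sign throughout $S_\zeta$. Freezing $\mu_1$, the map $\xi_1 \mapsto H$ is then strictly monotone with derivative $\gtrsim \max(N_1,N_2)^2$, so $\{\xi_1 : |H| \lesssim \max(L_1,L_2)\}$ has length $\lesssim \max(L_1,L_2)\max(N_1,N_2)^{-2}$; since the smaller annulus restricts $\mu_1$ to a set of measure $\lesssim \min(N_1,N_2)$, I get $|S_\zeta| \lesssim \min(N_1,N_2)\max(L_1,L_2)\max(N_1,N_2)^{-2} \le \max(N_1,N_2)\max(L_1,L_2)\min(N_1,N_2)^{-2}$. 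Feeding this back through $|E_\zeta|\lesssim \min(L_1,L_2)|S_\zeta|$ gives $|E_\zeta| \lesssim \max(N_1,N_2)\min(N_1,N_2)^{-2} L_1 L_2$, which is exactly \eqref{measure2}.

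The main obstacle is the sublevel-set step for \eqref{measure2}: one must check that the separation hypothesis makes $\partial_{\xi_1}H$ both large and sign-definite over all of $S_\zeta$, so that $H$ is honestly monotone in $\xi_1$ and the one-dimensional sublevel bound applies, and that the transverse variable $\mu_1$ is confined to an interval comparable to the smaller frequency. The reason this works is precisely that the anisotropic norm $|(\xi,\mu)|^2=3\xi^2+\mu^2$ coincides with the $\xi$-component of $\nabla\omega$; by contrast, foliating in $\mu_1$ would only see $\partial_{\mu_1}^2 H = -2\xi$ and would not produce a bound uniform in the total frequency. A minor point is to treat the annulus at frequency $\sim 1$ (where $\langle\cdot\rangle$ rather than $|\cdot|$ is relevant) and to make the monotonicity rigorous by splitting $S_\zeta$ into finitely many pieces on which $\partial_{\xi_1}H$ does not vanish.
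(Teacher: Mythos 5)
Your proof is correct and takes essentially the same route as the paper's own machinery (used verbatim in the appendix for the sibling Lemma \ref{lemma11}, and in the cited proof of \cite[Proposition 3.6]{MP}): Cauchy--Schwarz on the Fourier side reducing matters to the measure of the resonance set $E_\zeta$, integrating out $\tau_1$ to gain $\min(L_1,L_2)$, and the one-dimensional sublevel bound \eqref{IVT} applied to the resonance function, whose $\xi_1$-derivative $3(\xi-\xi_1)^2+(\mu-\mu_1)^2-3\xi_1^2-\mu_1^2$ is indeed sign-definite and $\gtrsim \max(N_1,N_2)^2$ under the four-fold separation, with the transverse variable $\mu_1$ confined to measure $\lesssim \min(N_1,N_2)$. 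As a minor aside, your computation actually yields the stronger factor $\min(N_1,N_2)^{1/2}/\max(N_1,N_2)$, which implies \eqref{measure2} since $\min(N_1,N_2)^{3/2}\le\max(N_1,N_2)^{3/2}$.
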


Observe in inequalities (\ref{measure2}) the $-\frac{1}{2}$ gain in the quotient $\frac{\max(N_1,N_2)^\frac{1}{2}}{\min(N_1,N_2)}$.

\section{Bilinear estimates}

The key bilinear estimate in \cite{MP} is the following:

\begin{prop}[{\cite[Proposition 4.1]{MP}}]
Let $s > \frac{1}{2}$. Then there exists $0<\delta<\frac{1}{4}$ such that for all $u$, $v$, two functions of $X^{s,\frac{1}{2}+ \delta}$ :
\begin{align}\label{bilinear1}
\|\partial_x (uv)\|_{X^{s,-\frac{1}{2}+2\delta}}  \lesssim \| u \|_{X^{s,\frac{1}{2}+\delta}} \| v \|_{X^{s,\frac{1}{2}+\delta}} .
\end{align}
\end{prop}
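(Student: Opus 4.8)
The plan is to argue by duality and reduce \eqref{bilinear1} to a trilinear estimate, and then to combine the resonance relation of the ZK dispersion with the two product estimates of Lemma \ref{lemma7} and the restriction estimate \eqref{restriction_theorem}. Since the dual space of $X^{s,b}$ for the natural $L^2$ pairing is $X^{-s,-b}$, it suffices to establish
\begin{align*}
\left| \int_{\mathbb{R}^3} \partial_x(uv)\, \overline{w}\, \right| \lesssim \| u \|_{X^{s,\frac12+\delta}} \| v \|_{X^{s,\frac12+\delta}} \| w \|_{X^{-s,\frac12-2\delta}}
\end{align*}
for all $w \in X^{-s,\frac12-2\delta}$. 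Passing to the Fourier side, the product $uv$ becomes a convolution: writing $(\tau,\xi,\mu) = (\tau_1,\xi_1,\mu_1)+(\tau_2,\xi_2,\mu_2)$ and setting $\sigma := \tau-\omega(\xi,\mu)$, $\sigma_i := \tau_i - \omega(\xi_i,\mu_i)$, the form is governed by the multiplier
\begin{align*}
M = |\xi|\, \frac{\langle |(\xi,\mu)|\rangle^{s}}{\langle |(\xi_1,\mu_1)|\rangle^{s}\langle |(\xi_2,\mu_2)|\rangle^{s}}\, \frac{1}{\langle \sigma_1\rangle^{\frac12+\delta}\langle \sigma_2\rangle^{\frac12+\delta}\langle \sigma\rangle^{\frac12-2\delta}},
\end{align*}
and the essential algebraic input is the resonance identity $\sigma = \sigma_1+\sigma_2+H$ with $H := \omega(\xi_1,\mu_1)+\omega(\xi_2,\mu_2)-\omega(\xi,\mu)$. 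A direct computation using $\omega(\xi,\mu)=\xi^3+\xi\mu^2$ yields $H = -3\xi_1\xi_2\xi - \xi_1\mu_2^2 - \xi_2\mu_1^2 - 2\xi\mu_1\mu_2$, and hence the lower bound $\max(\langle\sigma\rangle,\langle\sigma_1\rangle,\langle\sigma_2\rangle) \gtrsim |H|$ on the largest of the three modulations.

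Second, I would perform a Littlewood--Paley decomposition of $u$, $v$ and $w$ into pieces $P_{N_i}Q_{L_i}$ in both space frequency and modulation, reducing matters to a bound that is summable in the dyadic indices $N, N_1, N_2, L, L_1, L_2$. By the symmetry $u \leftrightarrow v$ one may assume $N_1 \ge N_2$; then $N \lesssim N_1$ and, since $\langle |(\xi,\mu)|\rangle^{s} \lesssim \langle |(\xi_1,\mu_1)|\rangle^{s}+\langle |(\xi_2,\mu_2)|\rangle^{s}$, the $s$-weight of the output is carried by the highest input, so the only genuinely dangerous factor in $M$ is $|\xi|\le N_1+N_2$ coming from $\partial_x$. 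I would split according to which modulation is the largest (three subcases: $\langle\sigma\rangle$, $\langle\sigma_1\rangle$ or $\langle\sigma_2\rangle$ maximal), the subcase where the \emph{output} modulation $\langle\sigma\rangle$ dominates being the most delicate as it carries the smallest exponent $\frac12-2\delta$, and, orthogonally, according to the frequency interaction: the non-balanced regimes $N_1\gg N_2$ or $N_1\sim N_2\gg N$, where \eqref{measure2} applies with its favourable $\min(N_1,N_2)^{-1}$ gain, versus the balanced regime $N_1\sim N_2\sim N$.

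Third, in each region I would estimate the trilinear form either by pairing two factors in $L^2$ via Lemma \ref{lemma7} and the third by Cauchy--Schwarz, or by three $L^4$ factors through \eqref{embedding} and \eqref{restriction_theorem}, choosing whichever is favourable. Whenever the longitudinal part of the resonance is non-degenerate, i.e. $|H|\sim|\xi_1\xi_2\xi|$, the largest modulation provides a factor $\gtrsim \max(L)^{\frac12}$ that outweighs the derivative $|\xi|$; when instead the $\xi_i$ are small and the transverse terms of $H$ dominate, one uses the restriction estimate \eqref{restriction_theorem}, whose gain $K(D)^{1/8}=(3D_x^2-D_y^2)^{1/8}$ supplies exactly the missing transverse smoothing (this is the reason the anisotropic norm $|(\xi,\mu)|=\sqrt{3\xi^2+\mu^2}$ is tuned to $K$). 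The small parameters $\delta>0$ are spent to turn the remaining weights into convergent geometric series in the dyadic indices, which gives the result after summation.

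Finally, I expect the main obstacle to be the balanced, near-resonant region $N_1\sim N_2\sim N$ with all modulations small. There the dispersion is weakest: the spatial gain of \eqref{measure2} is unavailable (its $\min(N_1,N_2)^{-1}$ disappears when the frequencies are comparable), the numerator of $M$ is of size $N^{1-s}$, and one must extract decay purely from the resonance identity. This forces a careful dichotomy between the longitudinal mechanism (detecting $|\xi_1\xi_2\xi|$ through the largest modulation) and the transverse mechanism (the $K^{1/8}$ restriction gain), and it is precisely the requirement that both close simultaneously that restricts the argument to $s>\frac12$ and requires the strictly positive modulation index $\frac12+\delta$.
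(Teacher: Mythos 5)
Note first that the paper does not actually reprove this proposition: it is quoted directly from Molinet--Pilod \cite[Proposition 4.1]{MP}, and the closest in-paper template is the proof of Proposition \ref{bilinear2prop}, which the authors describe as following the same strategy. Your overall architecture --- duality against $w\in X^{-s,\frac12-2\delta}$, dyadic localization $P_NQ_L$ in both space frequency and modulation, the (correctly computed) resonance identity $H=-3\xi_1\xi_2\xi-\xi_1\mu_2^2-\xi_2\mu_1^2-2\xi\mu_1\mu_2$ with $\max(\langle\sigma\rangle,\langle\sigma_1\rangle,\langle\sigma_2\rangle)\gtrsim|H|$, the use of \eqref{measure2} in unbalanced frequency regimes, and the $K(D)^{1/8}$ restriction estimate \eqref{restriction_theorem} --- is indeed the skeleton of that argument.

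There is, however, a genuine gap exactly in the region you yourself flag as the main obstacle. Your closing dichotomy (either $|H|\sim|\xi_1\xi_2\xi|$ and the largest modulation wins, or the transverse restriction gain applies) does not exhaust the balanced case. Take $(\xi_1,\mu_1)=(N,\sqrt3\,N)$, $(\xi_2,\mu_2)=(-\tfrac N2,\tfrac{\sqrt3}{2}N)$ and $\tau_i=\omega(\xi_i,\mu_i)$. Then $N_1\sim N_2\sim N_0\sim N$, and one checks
\begin{align*}
H=\tfrac34N^3-\tfrac34N^3+\tfrac32N^3-\tfrac32N^3=0,
\end{align*}
so all three modulations vanish simultaneously and no modulation factor is available; at the same time both input frequencies lie exactly on the cone $\mu^2=3\xi^2$, where $K(\xi_i,\mu_i)=|3\xi_i^2-\mu_i^2|=0$, so $K(D)^{1/8}$ applied to the inputs gives no gain either; \eqref{measure2} is unavailable because the frequencies are comparable, and the crude bound \eqref{measure} loses a full power of $N$ that the weight $N^{1-s}$ cannot absorb for $s$ near $\tfrac12$. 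This mixed-sign, near-cone, fully resonant regime ($\xi_1\xi_2<0$, $\mu_1\mu_2>0$ here) is precisely why \cite{MP} --- and this paper, in the fifth domain $\mathcal D_1,\dots,\mathcal D_4$ of the proof of Proposition \ref{bilinear2prop} --- invoke a third mechanism absent from your plan: a level-set measure estimate valid only under the sign conditions defining $S_1$ and $S_2$, exploiting the transversality $|\partial_{\mu_1}H|=2|\xi_1\mu_1-\xi_2\mu_2|\gtrsim N^2$ (respectively $|\partial_{\xi_1}H|\gtrsim N^2$, after localizing near the cone via \cite[Lemma 4.2]{MP}, which is where the smallness of $\alpha$ enters) to obtain the crucial gain $N_1^{-1/2}(L_1L_2)^{1/2}$ of estimate \eqref{measure3} in Lemma \ref{lemma11}. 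Without this ingredient, and without the accompanying near-cone/far-cone partition that decides which of the three mechanisms applies where, your case analysis cannot close for $s$ close to $\tfrac12$; the rest of your outline is sound.
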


This estimates allows to gain one derivative, as required by the non linear term in \eqref{ZK}, in the context of  Bourgain spaces.
When $s$ is large, we need to slightly improve this estimate when $s>1$, to derive a tame bilinear estimate where one $s$ is exchanged for a regularity index $1$: this is important so as to make a full use of $H^1$ bounds.

\begin{coro} \label{propo8}
Let $s >1$. Then there exists a constant $0<\delta < \frac{1}{4}$, and a constant $C(s)$ such that:
\begin{align}\label{bilinear3}
\forall u, \in X^{s, \frac{1}{2}+\delta}, \quad \|\partial_x (uv)\|_{X^{s,-\frac{1}{2}+2\delta}}  \le C(s) \left( \| u \|_{X^{s,\frac{1}{2}+\delta}} \| v \|_{X^{1,\frac{1}{2}+\delta}} + \| u \|_{X^{1,\frac{1}{2}+\delta}} \| v \|_{X^{s,\frac{1}{2}+\delta}} \right).
\end{align}
\end{coro}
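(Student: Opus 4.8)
\textbf{Proof plan for Corollary \ref{propo8}.}

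The plan is to derive the tame estimate \eqref{bilinear3} from the non-tame bilinear estimate \eqref{bilinear1} by means of a Littlewood–Paley frequency decomposition, exploiting the fact that in each interaction one of the two factors carries the lower frequency and can therefore be measured in the weaker $X^{1,\frac12+\delta}$ norm rather than $X^{s,\frac12+\delta}$. First I would write $u = \sum_{N_1} P_{N_1} u$ and $v = \sum_{N_2} P_{N_2} v$ and split the bilinear form according to whether $N_1 \lesssim N_2$ (low-high interaction) or $N_2 \lesssim N_1$ (high-low interaction); by symmetry of the roles of $u$ and $v$ the two regimes will produce the two terms on the right-hand side of \eqref{bilinear3}. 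The essential algebraic observation is that the output frequency $N$ of $P_N \partial_x(uv)$ satisfies $N \lesssim \max(N_1,N_2)$, so the weight $\langle |(\xi,\mu)| \rangle^s$ on the left can be distributed as $\langle\text{(high freq)}\rangle^{s}$ on the larger-frequency factor and $\langle\text{(low freq)}\rangle^{1}$ on the smaller one, the key point being that $s \ge 1$ makes $\langle\text{(low)}\rangle^{s-1} \le \langle\text{(high)}\rangle^{s-1}$ so the excess $s-1$ powers of regularity can always be moved onto the high-frequency factor.

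The concrete mechanism would be to introduce, for a dyadic block of size $N$ on the output, the auxiliary functions $u_{\sharp} := S(D)^{s-1} u$ (or rather its Littlewood–Paley pieces) so that the extra $\langle|(\xi,\mu)|\rangle^{s-1}$ weight coming from the high factor is absorbed, while the low factor keeps only a single power of $S(D)$. In the low-high regime $N_1 \le N_2$, on the support of $P_{N_1} u \cdot P_{N_2} v$ one has $\langle|(\xi,\mu)|\rangle^s \lesssim \langle N_2 \rangle^s \sim \langle N_1 \rangle \cdot \langle N_2\rangle^{s-1} \cdot \langle N_2\rangle^{\,0}$ up to acceptable factors; re-summing over the dyadic blocks and applying \eqref{bilinear1} with the regularity $s$ placed on $v$ and $1$ on $u$ yields the term $\|u\|_{X^{1,\frac12+\delta}}\|v\|_{X^{s,\frac12+\delta}}$. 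Symmetrically the high-low regime gives the other term. To make the re-summation rigorous I would either invoke the almost-orthogonality of the output blocks in $\tau - \omega$ and in $(\xi,\mu)$, or prove a single-block version of \eqref{bilinear1} with an explicit gain in $\max(N_1,N_2)/\min(N_1,N_2)$ and sum a geometric series; the frequency-localized estimates \eqref{measure} and \eqref{measure2} are the natural tools for obtaining such a summable gain.

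The main obstacle I anticipate is the resonant region $N_1 \sim N_2$, where neither factor is genuinely ``low'' and the naive splitting $\langle N_2 \rangle^{s-1}$-onto-the-high-factor does not by itself distinguish the two norms; there one must check that placing $s$ on either factor and $1$ on the other both give acceptable bounds, which is fine because when $N_1 \sim N_2$ one has $\langle N_1\rangle \sim \langle N_2\rangle$ and the tame right-hand side dominates the symmetric quantity $\|u\|_{X^{s}}\|v\|_{X^{s}}$ up to the constant $C(s)$. The second delicate point is the book-keeping of the $\delta$ and $b^+$ indices: one must verify that the same $\delta \in (0,\frac14)$ furnished by \eqref{bilinear1} works throughout and that the logarithmic losses from summing over comparable dyadic scales in the diagonal region are absorbed into $C(s)$ rather than degrading the regularity index. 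I expect that a clean way to avoid per-block losses is to keep the full $X^{s,\frac12+\delta}$ norm on the high factor (never actually decomposing it dyadically) and only decompose the low factor, so that \eqref{bilinear1} can be applied once, with its output summed trivially in the high frequency.
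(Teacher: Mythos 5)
Your overall idea --- put the $s$ derivatives on the high-frequency factor and only one derivative on the low one --- is exactly the mechanism behind the paper's proof, but the paper implements it without any Littlewood--Paley decomposition at all, and this makes the corollary a two-line consequence of \eqref{bilinear1}. Since $\vert(\cdot,\cdot)\vert$ is a norm and $s-1\ge 0$, one has the pointwise inequality $\langle\vert(\xi_0,\mu_0)\vert\rangle^{2(s-1)}\le C(s)\bigl(\langle\vert(\xi_1,\mu_1)\vert\rangle^{2(s-1)}+\langle\vert(\xi_0-\xi_1,\mu_0-\mu_1)\vert\rangle^{2(s-1)}\bigr)$ on the convolution variables; applying this under the Fourier integral (after majorizing $\f{u},\f{v}$ by their absolute values, which leaves all $X^{s,b}$ norms unchanged and is licit since \eqref{bilinear1} is proved by duality on moduli) splits the multiplier $S(D)^{s-1}$ onto one factor or the other, so that $\|\partial_x(uv)\|_{X^{s,b'}}$ is bounded by $\|\partial_x((S(D)^{s-1}u)\,v)\|_{X^{1,b'}}+\|\partial_x(u\,(S(D)^{s-1}v))\|_{X^{1,b'}}$. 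One then invokes \eqref{bilinear1} once, at the fixed regularity $1>\tfrac12$, and uses $\|S(D)^{s-1}u\|_{X^{1,b}}=\|u\|_{X^{s,b}}$. In particular there is no per-block estimate, no orthogonality argument, no resummation, and the $\delta$ of \eqref{bilinear1} at regularity $1$ is used unchanged --- the bookkeeping issues you flag simply never arise. Your closing remark (``decompose only the low factor, apply \eqref{bilinear1} once'') is this argument in embryo: pushed to its logical end, you do not need to decompose either factor, only to split the weight.

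As a free-standing plan, your dyadic route is workable but has one genuine soft spot that the plan does not resolve: the summation over the low frequency. In the low-high regime $N_1\ll N_2\sim N$, applying the non-localized estimate \eqref{bilinear1} blockwise gives no gain in $N_1$, so for each output block you face $\sum_{N_1\le N}\|P_{N_1}u\|_{X^{1,b}}$, an $\ell^1$ sum over all dyadic scales of quantities that are only square-summable; this is not a ``logarithmic loss absorbed into $C(s)$'' but a divergence unless you manufacture a gain $N_1^{0^-}$, e.g.\ by applying \eqref{bilinear1} at some intermediate regularity $\sigma\in(\tfrac12,1)$ on the low factor and paying $\langle N_1\rangle^{\sigma-1}$ against its $X^{1,b}$ norm. (The diagonal regime $N_1\sim N_2\gtrsim N$ is actually fine with your scheme, since $s>1$ yields the gain $(\langle N\rangle/\langle N_1\rangle)^{s-1}$ for Schur summation.) Your alternative suggestion --- deriving a frequency-localized version of \eqref{bilinear1} from \eqref{measure} and \eqref{measure2} --- amounts to re-running the interior of Molinet--Pilod's proof, which is much heavier than what the statement requires. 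So: right idea, correct in spirit, but the proposed execution either needs the missing low-frequency gain spelled out or, better, should be collapsed into the paper's multiplier-splitting argument.
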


\begin{proof}
First, by definition of the bracket, 
\begin{align*}
\langle \vert( \xi_0, \mu_0) \vert\rangle ^{2(s-1)} \le C(s) \left(\langle \vert( \xi_1, \mu_1) \vert\rangle ^{2(s-1)}+ \langle\vert( \xi_1-\xi_0, \mu_1-\mu_0) \vert\rangle ^{2(s-1)} \right),
\end{align*}
which implies, by using the previous proposition :
\begin{align*}
\| \partial_x(uv) \|_{X^{s,-\frac{1}{2}+2\delta}}
	& \lesssim \left\| \partial_x \left( \langle \vert ( D_x,D_y) \vert \rangle^{s-1} (u) v \right) \right\|_{X^{1,-\frac{1}{2}+2\delta}} + \left\| \partial_x \left( u \langle \vert ( D_x,D_y) \vert \rangle^{s-1} (v) \right) \right\|_{X^{1,-\frac{1}{2}+2\delta}} \\
	& \lesssim \| \langle \vert ( D_x,D_y) \vert \rangle^{s-1} u \|_{X^{1,\frac{1}{2}+\delta}} \| v \|_{X^{1,\frac{1}{2}+\delta}} +  \| u \|_{X^{1,\frac{1}{2}+\delta}} \| \langle \vert ( D_x,D_y) \vert \rangle^{s-1} v \|_{X^{1,\frac{1}{2}+\delta}} \qedhere
\end{align*}
\end{proof}

We will use both estimates \eqref{bilinear1} and \eqref{bilinear3} in the next section (in particular in the fixed point result). In order to prove of Theorem \ref{th1}, we will also need another bilinear estimate in $X^{-\rho,b}$ spaces, with \emph{negative} regularity index $-\rho <0$: this gain of space derivative is crucial, and possible because we won't need the space derivative on $uv$ there.

\begin{prop}\label{bilinear2prop}
Let $\delta>0$ small ($\delta < \frac{1}{12}$), $b'= -\frac{1}{2}+2\delta$ and $b=\frac{1}{2}+\delta$. There exist a constant $C$, independent of $\delta$, such that for all $0<\rho< \frac{1}{2}-6\delta$ the following estimate holds:
\begin{align}\label{bilinear2}
\forall u,v \in X^{-\rho, b}, \quad \| uv \|_{X^{-\rho,b'}} \le C \| u \|_{X^{-\rho,b}} \|v \|_{X^{-\rho,b}}.
\end{align}
\end{prop}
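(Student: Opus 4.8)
The plan is to argue by duality and reduce \eqref{bilinear2} to the trilinear estimate
\[
\left|\int_{\mathbb R^3} u\, v\, \bar w\; dt\,dx\,dy\right| \lesssim \|u\|_{X^{-\rho,b}}\,\|v\|_{X^{-\rho,b}}\,\|w\|_{X^{\rho,-b'}},
\]
using that the dual of $X^{-\rho,b'}$ for the $L^2_{txy}$ pairing is $X^{\rho,-b'}$ and that $-b'=\frac12-2\delta>0$. I would then insert a Littlewood--Paley decomposition, writing $u=\sum_{N_1,L_1}P_{N_1}Q_{L_1}u$ and similarly for $v,w$, so the left-hand side becomes a sum over dyadic blocks of integrals of triple products. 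Setting $f_{N_1L_1}=\langle N_1\rangle^{-\rho}\langle L_1\rangle^{b}\|P_{N_1}Q_{L_1}u\|_{L^2}$ (so $\|f\|_{\ell^2}=\|u\|_{X^{-\rho,b}}$) and analogous $\ell^2$-normalised sequences $g,h$ for $v,w$, the estimate reduces to summing
\[
\sum \langle N_1\rangle^{\rho}\langle N_2\rangle^{\rho}\langle N_3\rangle^{-\rho}\,\langle L_1\rangle^{-b}\langle L_2\rangle^{-b}\langle L_3\rangle^{b'}\,B(N_i,L_i)\,f_{N_1L_1}g_{N_2L_2}h_{N_3L_3},
\]
where $B$ is the bilinear bound furnished by Lemma \ref{lemma7}, and closing by Cauchy--Schwarz in the dyadic indices.

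Two structural facts drive the analysis. First, frequency localisation forces $\xi_1+\xi_2=\xi_3$ and $\mu_1+\mu_2=\mu_3$, so the two largest of $N_1,N_2,N_3$ are always comparable. Second, since $\tau_1+\tau_2=\tau_3$ on the support, one has the resonance relation $\max(L_1,L_2,L_3)\gtrsim|\Omega|$ with $\Omega=\omega(\xi_1,\mu_1)+\omega(\xi_2,\mu_2)-\omega(\xi_1+\xi_2,\mu_1+\mu_2)$, which for \eqref{ZK} I would compute explicitly. I would then split into frequency configurations. When $N_3$ is comparable to the largest frequency (so one input is low), the weight $\langle N_1\rangle^\rho\langle N_2\rangle^\rho\langle N_3\rangle^{-\rho}$ is harmless because $\langle N_3\rangle^{-\rho}$ cancels the growth, and the $L^4$ embedding \eqref{embedding} (or \eqref{measure2} in the frequency-separated regime) closes the bound. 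When all three frequencies are comparable, I would pair two factors with the curvature-improved $L^4$ estimate \eqref{restriction_theorem}, which gains a half power of the largest frequency; this beats the net weight $\langle N\rangle^{\rho}$ exactly when $\rho<\tfrac12$, the surviving modulation weights being summable since $b>\tfrac12$.

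The main obstacle, and the reason a new estimate is needed, is the high--high--low interaction $N_3\ll N_1\sim N_2$: here the weight is $\langle N\rangle^{2\rho}$ with no help from $\langle N_3\rangle^{-\rho}$, so the curvature gain $\langle N\rangle^{-1/2}$ alone would only reach $\rho<\tfrac14$. To recover $\rho<\tfrac12-6\delta$ I would exploit the resonance relation: away from a degenerate set one has $|\Omega|\sim N^2N_3$, so the largest modulation is $\gtrsim N^2$, and since $b=\tfrac12+\delta$ and $-b'=\tfrac12-2\delta$ the corresponding weight supplies an extra gain of almost $\langle N\rangle^{-1}$. I would organise this by which $L_i$ is largest: if the largest modulation is on the output $w$, combine the curvature $L^4$ estimate on the balanced pair $(u,v)$ with the gain from $\langle L_3\rangle^{b'}$; if it sits on an input, keep that factor in $L^2$ (retaining the full $\langle L_i\rangle^{-b}$ to convert the large modulation into a frequency gain) and apply the $L^4$ estimates, with the curvature refinement on the remaining high-frequency factor, to the other two. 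In each case one extracts a net negative power of $N$ and modulation weights summable in the $L_i$, and a final Cauchy--Schwarz in $(N_i,L_i)$ produces $\|f\|_{\ell^2}\|g\|_{\ell^2}\|h\|_{\ell^2}$.

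The delicate point throughout is the degenerate set $3\xi^2=\mu^2$, where the curvature gain of \eqref{restriction_theorem} and the resonance lower bound on $|\Omega|$ weaken simultaneously; controlling it is precisely what the packaged bounds \eqref{measure}--\eqref{measure2} of Lemma \ref{lemma7} are built for, and I would invoke them there in place of the raw $L^4$ estimate. Balancing the curvature gain, the modulation gain, and the frequency weight $\langle N\rangle^{2\rho}$ against the small parameter $\delta$ is what forces the constraint $\rho<\tfrac12-6\delta$, and I expect this balancing in the high--high--low case to be the technical heart of the argument.
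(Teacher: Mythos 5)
Your skeleton (duality, dyadic decomposition in $N_i,L_i$, case analysis by frequency configuration) matches the paper's, but both of the regimes you correctly single out as hardest contain genuine gaps. In the high--high--low case, your argument rests on the claim that $|\Omega|\sim N^2N_3$ away from the degenerate lines $3\xi^2=\mu^2$. That is a KdV heuristic which fails for \eqref{ZK}: writing the low (output) frequency as $(\xi_0,\mu_0)$ with $|(\xi_0,\mu_0)|\sim N_3\ll N$ and the high input as $(\xi_1,\mu_1)$, the resonance function is to leading order
\begin{align*}
\Omega \approx -\xi_0\left(3\xi_1^2+\mu_1^2\right)-2\xi_1\mu_1\mu_0,
\end{align*}
which degenerates on a set unrelated to $3\xi^2=\mu^2$: for instance $\xi_0\approx 0$, $|\mu_1|=O(1)$, $|\xi_1|\sim N$ gives $|\Omega|\sim N|\mu_0|\ll N^2$ even though $|3\xi_1^2-\mu_1^2|\sim N^2$ there, and exact cancellation between the two terms produces further small-resonance configurations. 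So your dichotomy (either a modulation gain of almost $\langle N\rangle^{-1}$, or Lemma \ref{lemma7} on a degenerate set) does not hold, and the step recovering $\rho<\frac12-6\delta$ is unjustified. The paper avoids resonance entirely in this regime: it pairs the high input with the low output, where the frequency-separation hypothesis of \eqref{measure2} \emph{does} hold, and interpolates between \eqref{measure} and \eqref{measure2} with a parameter $\theta$; summability in the output modulation $L_0$ forces $\theta>4\delta$, the frequency gain forces $\theta<\frac13(1-2\rho)$, and the nonempty window is exactly the source of the constraint $\rho<\frac12-6\delta$ --- numerology your sketch anticipates but never derives.

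The second gap is in the balanced case $N_1\sim N_2\sim N_3$ near the degenerate lines $\mu^2=3\xi^2$, where you propose to fall back on \eqref{measure}--\eqref{measure2}. Neither applies there: \eqref{measure2} requires $N_1\ge 4N_2$ or $N_2\ge 4N_1$, violated when all frequencies are comparable, while \eqref{measure} gives $\max(L)^{1/2}\max(N)$, a loss of a full power of $N$ with no gain, which cannot absorb the weight $N^{\rho}$. This is precisely where the paper needs its genuinely new ingredient, Lemma \ref{lemma11}: a sign-restricted high--high bilinear estimate on the sets $S_1,S_2$ with gain $N_1^{-1/2}(L_1L_2)^{1/2}$, proved in the appendix via a measure bound on resonance level sets (differentiating the resonance function in $\mu_1$ or $\xi_1$ according to the sign pattern), complemented by the observation that in the remaining same-sign subregion one has $N_1^3\lesssim \max_i|\tau_i-\omega_i|$, after which one argues as in the low-frequency domain. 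Your proposal has no substitute for this lemma, so the estimate does not close in a neighborhood of the degenerate lines --- which is exactly the point the paper identifies as the main technical novelty of Proposition \ref{bilinear2prop}. The parts of your plan covering the remaining configurations (low frequencies via \eqref{embedding}, separated frequencies via \eqref{measure2}, balanced frequencies away from the lines via the curvature estimate \eqref{restriction_theorem} for $\rho<\frac12$) do agree with the paper's first, second/third, and fifth-domain ($\mathcal{D}_4$) treatments, respectively.
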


This proposition is the main new technical result of the paper.

\begin{proof}
The idea of proof is similar to the one for (\ref{bilinear1}) in \cite{MP}, working in frequencies, and spliting in various domains. The main difference is that we will take advantage of the absence of derivative $\partial_x$ in the estimate (\ref{bilinear2}) compared to (\ref{bilinear1}) to work with a lower space regularity $-\rho<0$ instead of $s>\frac{1}{2}$.

The desired estimate (\ref{bilinear2}) is equivalent by duality to prove
\begin{align*}
\int_{\mathbb{R}^6} \f{w_0} \f{u_1} \f{v_2} \frac{\langle \vert (\xi_1,\eta_1)\vert \rangle^\rho \langle \vert (\xi_2,\eta_2)\vert \rangle^\rho}{\langle \vert (\xi_0,\eta_0)\vert \rangle^\rho} \frac{d(\tau_0,\xi_0, \mu_0)d(\tau_1, \xi_1,\mu_1)}{ \langle \tau_0-\omega_0 \rangle^{-b'} \langle \tau_1-\omega_1 \rangle^b \langle \tau_2-\omega_2 \rangle^b} \lesssim \|w_0 \|_{L^2} \|u_1 \|_{L^2} \|v_2 \|_{L^2},
\end{align*}
where $\xi_2:=\xi_0-\xi_1$, $\mu_2:= \mu_0-\mu_1$, $\tau_2:= \tau_0-\tau_1$, $\f{w_0}:=\f{w}(\tau_0,\xi_0, \mu_0)$, $\f{u_1}:=\f{u}(\tau_1,\xi_1, \mu_1)$, $\f{v_2}:=\f{v}(\tau_2,\xi_2, \mu_2)$, and $\omega_i:= \xi_i \left( \xi_i^2+\mu_i^2 \right)$.

We decompose along different frequencies: given integers $N_0, N_1, N_2$, let
\begin{align*}
I_{N_0,N_1,N_2}:= \int_{\mathbb{R}^6} \f{P_{N_0}w_0} \f{P_{N_1}u_1} \f{P_{N_2}v_2} \frac{\langle \vert (\xi_1,\eta_1)\vert \rangle^\rho \langle \vert (\xi_2,\eta_2)\vert \rangle^\rho}{\langle \vert (\xi_0,\eta_0)\vert \rangle^\rho} \frac{d(\tau_0,\xi_0, \mu_0)d(\tau_1, \xi_1,\mu_1)}{ \langle \tau_0-\omega_0 \rangle^{-b'} \langle \tau_1-\omega_1 \rangle^b\langle \tau_2-\omega_2 \rangle^b}.
\end{align*}
Analoguously, for the operators $Q_L$, given furthermore  integers $L_0, L_1, L_2$, denote
\begin{align*}
I_{N_0,N_1,N_2}^{L_0,L_1,L_2}:= \int_{\mathbb{R}^6} \f{(Q_{L_0}P_{N_0}w_0)} \f{(Q_{L_1}P_{N_1}u_1)} \f{(Q_{L_2}P_{N_2}v_2)} \frac{\langle \vert (\xi_1,\eta_1)\vert \rangle^\rho \langle \vert (\xi_2,\eta_2)\vert \rangle^\rho d(\tau_0,\xi_0, \mu_0)d(\tau_1, \xi_1,\mu_1)}{\langle \vert (\xi_0,\eta_0)\vert \rangle^\rho \langle \tau_0-\omega_0 \rangle^{-b'} \langle \tau_1-\omega_1 \rangle^b \langle \tau_2-\omega_2 \rangle^b}.
\end{align*}

We split the frequencies into five main domains.

\textbf{First domain.} $N_1 \le 2$, $N_2 \le 2$ and $N_0 \le 2$. 
We use the Plancherel equality and Hölder inequality:
\begin{align*}
\left\vert I_{N_0,N_1,N_2} \right\vert \lesssim \left\| \left(\frac{\f{u_1}}{\langle \tau_1- \omega_1 \rangle ^b} \right)^\vee \right\|_{L^4} \left\| \left( \frac{\f{v_2} }{\langle \tau_2- \omega_2 \rangle^b} \right)^\vee \right\|_{L^4} \| w_0 \|_{L^2},
\end{align*}
and we conclude by the adequate embedding (\ref{embedding}) : $X^{0,\frac{5}{12}^+} \hookrightarrow L^4$.

\textbf{Second domain.} $4 \le N_1$, $N_2 \le \frac{N_1}{4}$ so $ \frac{N_1}{2} \le N_0 \le 2N_1$. We use the upper bound of the localized frequencies (\ref{measure2}):
\begin{align*}
I_{N_0,N_1,N_2}^{L_0,L_1,L_2}
	& \lesssim \frac{N_1^\rho N_2^\rho}{N_0^\rho L_0^{-b'}L_1^bL_2^b} \left\| \left( P_{N_1} Q_{L_1}u_1 \right) \left( P_{N_2} Q_{L_2} v_2 \right) \right\|_{L^2} \|P_{N_0} Q_{L_0} w_0 \|_{L^2} \\
	& \lesssim \frac{N_1^\rho}{L_0^{-b'}L_1^b L_2^b} \frac{N_2^\frac{1}{2}}{N_1}L_1^\frac{1}{2}L_2^\frac{1}{2}\|P_{N_1}Q_{L_1} u_1 \|_{L^2} \| P_{N_2} Q_{L_2} v_2 \|_{L^2} \|P_{N_0} Q_{L_0} w_0 \|_{L^2}.
\end{align*}
For $\rho< \frac{1}{2}$, the sum over $L_0$, $L_1$, $L_2$, $N_1$, $N_2$ and $N_0$ gives:
\begin{align*}
\sum_{N_1 \le 2, N_2 \le \frac{N_1}{4},N_0}I_{N_0,N_1,N_2} \lesssim \|w_0 \|_{L^2} \|u_1 \|_{L^2} \| v_2 \|_{L^2}.
\end{align*}

\textbf{Third domain.} $4 \le N_2$, $N_1 \le \frac{N_2}{4}$ so $\frac{N_2}{2} \le N \le 2N_2$. By symmetry between $N_1$ and $N_2$, the third domain is dealt with as for the second domain.

\textbf{Fourth domain.} $4 \le N_1$, $N_0 \le \frac{N_1}{4}$ so $\frac{N_1}{2} \le N_2 \le 2N_1$. (or equivalently, $4 \le N_2$, $N_0 \le \frac{N_2}{4}$ so $\frac{N_2}{2} \le N_1 \le 2N_2$). We use the same decomposition as the second domain, an interpolation between (\ref{measure}) and (\ref{measure2}) by a coefficient $\theta \in (0,1)$, with $\widetilde{f}(x,y,z):=f(-x,-y,-z)$:
\begin{align*}
I_{N_0,N_1,N_2}^{L_0,L_1,L_2}
	& \lesssim \frac{N_1^\rho N_2^\rho}{N_0^\rho L_0^{-b'}L_1^bL_2^b} \left\| \left( \widetilde{P_{N_1} Q_{L_1} u_1}\right) P_{N_0} Q_{L_0} w_0 \right\|_{L^2} \|P_{N_2} Q_{L_2}v_2 \|_{L^2} \\
	& \lesssim \frac{N_1^\rho}{L_0^{-b'} L_1^b L_2^b}\frac{N_0^{\frac{1}{2}(1+\theta)}}{N_1^{1-\theta}}L_0^{\frac{1}{2}(1-\theta)} L_1^{\frac{1}{2}} \| P_{N_0} Q_{L_0}w_0 \|_{L^2} \|P_{N_1} Q_{L_1}u_1\|_{L^2} \|P_{N_2} Q_{L_2}v_2 \|_{L^2}\\
	& \lesssim \frac{N_1^{-\frac{1}{2}+ \frac{3}{2}\theta+\rho}}{L_0^{-b'+\frac{\theta}{2} -\frac{1}{2}}L_1^{b-\frac{1}{2}} L_2^b}\| P_{N_0} Q_{L_0}w_0 \|_{L^2} \|P_{N_1} Q_{L_1}u_1\|_{L^2} \|P_{N_2} Q_{L_2}v_2 \|_{L^2}.
\end{align*}
By choosing $\theta$ such that $ 4\delta <\theta < \frac{1}{3} \left( 1-2 \rho \right)$ (this is one of the points giving the bounds on $\delta$ and $\rho$), the sum over $L_0$, $L_1$, $L_2$, $N_0$, $N_1$ and $N_2$ is bounded by  $\|w_0 \|_{L^2} \|u_1\|_{L^2} \|v_2 \|_{L^2}$.

\textbf{Fifth domain.} $4 \le N_1$, $4 \le N_2$, $N_0 \ge \frac{N_1}{2}$ and $N_0 \ge \frac{N_2}{2}$ (so $\frac{N_2}{2}\le N_1 \le 2N_2$, $\frac{N_1}{2} \le N_0 \le 2 N_1$ and $\frac{N_2}{2} \le N_0 \le 2N_2$).

We divide this domain depending on the values of $\xi_i$ and $\mu_i$, with a coefficient $\alpha$ to define later:
\begin{align*}
\begin{array}{l}
\mathcal{D}_1:=\left\{ \left(\tau_0, \xi_0, \mu_0, \tau_1, \xi_1, \mu_1 \right) ; (1-\alpha)^\frac{1}{2} \sqrt{3}\vert \xi_i \vert \le \vert \mu_i \vert \le (1-\alpha)^{-\frac{1}{2}} \sqrt{3} \vert \xi_i \vert, i=1,2 \right\}, \\
 \mathcal{D}_2:=\left\{ \left(\tau_0, \xi_0, \mu_0, \tau_1, \xi_1, \mu_1 \right) ; (1-\alpha)^\frac{1}{2} \sqrt{3}\vert \xi_i \vert \le \vert \mu_i \vert \le (1-\alpha)^{-\frac{1}{2}} \sqrt{3} \vert \xi_i \vert, i=0,1 \right\}, \\
 \mathcal{D}_3:=\left\{ \left(\tau_0, \xi_0, \mu_0, \tau_1, \xi_1, \mu_1 \right) ; (1-\alpha)^\frac{1}{2} \sqrt{3}\vert \xi_i \vert \le \vert \mu_i \vert \le (1-\alpha)^{-\frac{1}{2}} \sqrt{3} \vert \xi_i \vert, i=0,2 \right\}, \\
\mathcal{D}_4:=\mathbb{R}^6_{\backslash \mathcal{D}_1 \cup\mathcal{D}_2 \cup\mathcal{D}_3}.
\end{array}
\end{align*}
First, we work on $\mathcal{D}_1$, in a subregion where $\xi_1 \xi_2>0$ and $\mu_1\mu_2>0$. We claim that:
\begin{align*}
N_1^3 \lesssim \max \left\{ \vert \tau_0-\omega_0 \vert, \vert \tau_1-\omega_1 \vert, \vert \tau_2-\omega_2 \vert \right\}.
\end{align*}
In fact, if $\vert \tau_1-\omega_1 \vert + \vert \tau_2- \omega_2 \vert \lesssim N_1^3$, then:
\begin{align*}
\tau_0 - \omega_0 =\tau_1-\omega_1 +\tau_2 -\omega_2 -  \xi_1 \left( \xi_2^2 + 2 \xi_1 \xi_2 + \mu_2^2 +2 \mu_1 \mu_2 \right) -  \xi_2 \left( \xi_1^2 + 2 \xi_1 \xi_2 + \mu_1^2 +2 \mu_1 \mu_2 \right) \simeq \pm N_1^3,
\end{align*}
which proves the claim if $\xi_1 \xi_2 >0$ and $\mu_1 \mu_2 >0$. The other cases are similar. We can thus argue as in the first domain, and obtain that for $ \rho \le \frac{3}{2}-6\delta$,
\begin{align*}
\sum_{N_0,N_1,N_2} I_{N_0,N_1,N_2} 
	& \lesssim \sum_{N_0,N_1,N_2}N_1^{\rho+3b'} \left\| \left( \frac{\f{P_{N_1} u_1}}{\langle \tau_1-\omega_1 \rangle^b} \right)^\vee \right\|_{X^{0, \frac{5}{6}^+}} \left\| \left( \frac{\f{P_{N_2} u_2}}{\langle\tau_2-\omega_2 \rangle^b} \right)^\vee \right\|_{X^{0, \frac{5}{6}^+}} \| P_{N_0} w_0 \|_{L^2} \\
	&  \lesssim \| w_0 \|_{L^2} \|u_1 \|_{L^2} \|v_2 \|_{L^2}.
\end{align*} 
Next, the subregions $\xi_1 \xi_2>0$ and $\mu_1 \mu_2<0$, or $\xi_1 \xi_2 <0$ and $\mu_1 \mu_2 >0$, we use the dyadic decomposition along the flow with the operators $Q_L$ and by a Cauchy-Schwarz inequality:
\begin{align*}
I_{N_0,N_1,N_2}^{L_0,L_1,L_2} \lesssim \frac{N_1^\rho }{L_0^{-b'} L_1^b L_2^b} \left\| \left( P_{N_1} Q_{L_1} u_1 \right) \left( P_{N_2} Q_{L_2} v_2 \right) \right\|_{L^2} \| P_{N_0} Q_{L_0} w_0 \| _{L^2}.
\end{align*}

To deal with the first $L^2$-norm, we need to bound the interaction of the high/high frequencies. This is the purpose of the following lemma.

\begin{lemm}\label{lemma11}
Let $N_1$, $N_2$, $L_1$ and $L_2$ be four integers involved in the operators $P_N$ and $Q_L$, and two functions $u_1$ and $v_2$. 
In the case $\frac{N_1}{2} \le N_2 \le 2N_1$, define the subsets $S_1$ and $S_2$ of $\mathbb{R}^2_{\xi_1, \mu_1} \times \mathbb{R}^2_{\xi_2, \mu_2}$ by 
\begin{align*}
S_1(\xi_1,\xi_2, \mu_1, \mu_2) & :=\left\{ \xi_1 \xi_2 >0 ,\mu_1 \mu_2 <0 \right\} \cup \left\{ \xi_1 \xi_2 <0 , \mu_1 \mu_2 >0 \right\}, \\
S_2(\xi_1,\xi_2, \mu_1, \mu_2) & :=\left\{ \xi_1 \xi_2 <0 ,\mu_1 \mu_2 <0 \right\},
\end{align*}
and the two Fourier multipliers $(J_i)_{1 \le i \le 2}$, which take into account only some interaction of frequencies, by:
\begin{align*}
\f{J_i(u_1,v_2)}(\tau, \xi, \mu):= \int_{\mathbb{R}^3} \mathbf{1}_{S_i (\xi_1,\xi-\xi_1, \mu_1, \mu-\mu_1)} \f{u_1}(\tau_1, \xi_1, \mu_1) \f{v_2}(\tau-\tau_1, \xi-\xi_1,\mu-\mu_1)d\tau_1d\xi_1 d\mu_1.
\end{align*}
Then, for $\alpha$ small enough, one has
\begin{align}\label{measure3}
\left\| J_i \left( P_{N_1} Q_{L_1} u_1, P_{N_2} Q_{L_2} v_2 \right) \right\|_{L^2} \lesssim N_1^{-\frac{1}{2}} (L_1L_2)^\frac{1}{2} \left\| P_{N_1} Q_{L_1} u_1 \right\|_{L^2} \left\|P_{N_2} Q_{L^2} v_2 \right\|_{L^2}.
\end{align}
\end{lemm}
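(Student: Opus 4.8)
The plan is to reduce \eqref{measure3} to a sublevel-set estimate for the resonance function, and then to extract the gain $N_1^{-1/2}$ from the transversality of the two characteristic surfaces, which the sign conditions defining $S_i$ are precisely designed to guarantee.

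First I would dualize the convolution. Writing $f_1 := P_{N_1}Q_{L_1}u_1$, $f_2 := P_{N_2}Q_{L_2}v_2$, $g := J_i(f_1,f_2)$ and $\sigma_j := \tau_j - \omega_j$, a Cauchy--Schwarz inequality in the convolution defining $\f{g}$ gives, for each output frequency $z=(\tau,\xi,\mu)$,
\[ |\f{g}(z)|^2 \le |E_i(z)| \int |\f{f_1}(y)|^2\,|\f{f_2}(z-y)|^2\, dy, \]
where $y=(\tau_1,\xi_1,\mu_1)$ and $E_i(z)$ is the set of $y$ meeting all the localizations ($|(\xi_1,\mu_1)|\sim N_1$, $|(\xi_2,\mu_2)|\sim N_1$, $\langle\sigma_1\rangle\sim L_1$, $\langle\sigma_2\rangle\sim L_2$, and the sign condition $\mathbf 1_{S_i}$). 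Integrating in $z$ and using $\|f_j\|_{L^2}=\|\f{f_j}\|_{L^2}$ reduces \eqref{measure3} to the uniform bound $\sup_z |E_i(z)| \lesssim N_1^{-1} L_1 L_2$.

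Next I would introduce the resonance function $\Omega := \omega_1+\omega_2-\omega_0$. The decisive algebraic fact is that the cubic terms cancel, so that, with $z$ frozen, $\Omega$ is a \emph{quadratic} polynomial in $(\xi_1,\mu_1)$ whose gradient is the relative group velocity
\[ \nabla_{\xi_1,\mu_1}\Omega = \nabla\omega(\xi_1,\mu_1)-\nabla\omega(\xi_2,\mu_2) = \big(3(\xi_1^2-\xi_2^2)+(\mu_1^2-\mu_2^2),\ 2(\xi_1\mu_1-\xi_2\mu_2)\big), \]
with constant Hessian $\left(\begin{smallmatrix}6\xi&2\mu\\2\mu&2\xi\end{smallmatrix}\right)$ of determinant $4K(\xi,\mu)$. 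Changing variables $(\tau_1,\xi_1,\mu_1)\mapsto(\sigma_1,\sigma_2,\mu_1)$ (or $(\sigma_1,\sigma_2,\xi_1)$) has Jacobian $|\partial_{\xi_1}\Omega|$ (resp. $|\partial_{\mu_1}\Omega|$); integrating in $\sigma_1,\sigma_2$ first produces the factor $L_1 L_2$ and leaves $|E_i(z)|\lesssim L_1 L_2 \int d\mu_1/|\partial_{\xi_1}\Omega|$. Everything thus comes down to the pointwise transversality bound $|\nabla\Omega|\gtrsim N_1^2$ on the region cut out by $S_i$ (and $\mathcal D_1$), after which the remaining one-dimensional integral over an interval of length $\sim N_1$ is $\lesssim N_1^{-1}$ — using at each point whichever partial derivative is the larger one, so that the chosen change of variables is nondegenerate.

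The heart of the proof, and where I expect the real difficulty, is this transversality bound. Since $\nabla\Omega$ is the difference of the two group velocities, it vanishes exactly when $\nabla\omega(\xi_1,\mu_1)=\nabla\omega(\xi_2,\mu_2)$; on the cone $|\mu_i|\approx\sqrt3|\xi_i|$ selected by $\mathcal D_1$ a short computation shows this forces $(\xi_2,\mu_2)=\pm(\xi_1,\mu_1)$. The $+$ sign is the critical point $(\xi_1,\mu_1)=(\xi_2,\mu_2)=(\xi,\mu)/2$ of $\Omega$, which lies in $\{\xi_1\xi_2>0,\ \mu_1\mu_2>0\}$ and is therefore excluded from $S_1\cup S_2$; the $-$ sign is the anti-diagonal $(\xi_2,\mu_2)=-(\xi_1,\mu_1)$, which belongs to $S_2$ and corresponds to vanishing output frequency. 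On $S_1$ the mixed-sign condition keeps both the $\xi$- and $\mu$-components of $\nabla\Omega$ away from the degeneracies once $\alpha$ is small, yielding $|\nabla\Omega|\gtrsim N_1^2$ by a direct case analysis on the two components. The genuinely delicate case is $S_2$ near the anti-diagonal, where both components degenerate simultaneously: here one must exploit that in the regime where the lemma is invoked the output frequency is comparable to $N_1$, so one stays at distance $\gtrsim N_1$ from the anti-diagonal, and combine this with the $\mathcal D_1$ restriction to keep the Hessian nondegenerate at scale $N_1$. Making this quantitative — controlling $K(\xi,\mu)$ against the small quantities $K(\xi_i,\mu_i)$ permitted in $\mathcal D_1$ and fixing $\alpha$ accordingly — is the main obstacle of the argument.
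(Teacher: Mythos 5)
Your reduction and your treatment of $S_1$ are essentially the paper's own argument. Cauchy--Schwarz in the convolution reduces \eqref{measure3} to the sublevel-set bound $\sup_z |E_i(z)| \lesssim N_1^{-1}L_1L_2$ (your target is the correct one: the paper's intermediate display \eqref{measure5} is inconsistent as written, but its chain \eqref{measure6}--\eqref{measure7} delivers exactly $L_1L_2/N_1$), and your change of variables $(\tau_1,\xi_1)\mapsto(\sigma_1,\sigma_2)$ with Jacobian $|\partial_{\xi_1}\Omega|$ is a harmless mechanical variant of the paper's two steps (integrate $\tau_1$ first, giving $\min(L_1,L_2)$, then the one-dimensional sublevel estimate \eqref{IVT} with derivative $\gtrsim N_1^2$, giving $\max(L_1,L_2)/N_1^2$ per slice of length $\lesssim N_1$). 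On $S_1$ your conclusion is right, with one inaccuracy of phrasing: it is not true that both components of $\nabla\Omega$ stay nondegenerate there --- $\partial_{\xi_1}\Omega = (3\xi_2^2+\mu_2^2)-(3\xi_1^2+\mu_1^2)$ can perfectly vanish on $S_1$ --- but only one large component is needed, and the mixed signs force $\xi_1\mu_1$ and $\xi_2\mu_2$ to have opposite signs, so $|\partial_{\mu_1}\Omega| = 2(|\xi_1\mu_1|+|\xi_2\mu_2|) \gtrsim N_1^2$ on the cone $|\mu_i|\approx\sqrt{3}|\xi_i|$, exactly as in \eqref{derivative}.

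The genuine gap is the $S_2$ case, which you yourself flag as ``the main obstacle'' and leave open; moreover the mechanism you sketch for it would fail. You propose to combine distance $\gtrsim N_1$ from the anti-diagonal with nondegeneracy of the Hessian, whose determinant is $4K(\xi,\mu)=4(3\xi^2-\mu^2)$. But $K(\xi,\mu)$ vanishes whenever the \emph{output} sits on the characteristic cone, and this is fully compatible with every constraint of the lemma: take $(\xi_1,\mu_1)=N(1,\sqrt{3})$ and $(\xi_2,\mu_2)=-tN(1,\sqrt{3})$ with $t\in(\tfrac12,\tfrac34)$. Both inputs lie on the cone with the $S_2$ signs, $N_1\sim N_2\sim N$, the output $(1-t)N(1,\sqrt{3})$ has size $\sim N$, yet $K(\xi,\mu)=0$ identically along this family, so no Hessian lower bound ``at scale $N_1$'' exists. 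The gradient is nonetheless large there, $\partial_{\xi_1}\Omega = 6N^2(1-t^2)\gtrsim N_1^2$, but this comes from first-order, not second-order, information. The missing quantitative input is precisely what the paper imports: because of the anisotropic norm one has the exact identity $|\partial_{\xi_1}\mathcal{H}| = \bigl| |(\xi_1,\mu_1)|^2 - |(\xi_2,\mu_2)|^2 \bigr|$, and Molinet--Pilod's Lemma 4.2 gives, under the $S_2$ signs and the cone condition with parameter $\alpha$, the inequality $|(\xi,\mu)|^2 \le \bigl| |(\xi_1,\mu_1)|^2-|(\xi_2,\mu_2)|^2 \bigr| + f(\alpha)\max\bigl(|(\xi_1,\mu_1)|^2,|(\xi_2,\mu_2)|^2\bigr)$. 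Since the lemma is only invoked in the fifth domain, where the output satisfies $N_0\gtrsim N_1$ (you correctly noticed this hypothesis is implicit in the statement), choosing $\alpha$ with $f(\alpha)\le\frac{1}{10}$ yields $|\partial_{\xi_1}\mathcal{H}|\gtrsim N_1^2$. Without this lemma or an equivalent substitute your argument does not close; with it, it does.
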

Observe in (\ref{measure3}) the $-\frac{1}{2}$ gain for high/high interaction in $N_1^{-1/2}$; the smallness requirement on $\alpha$ comes from this lemma. Let us postpone its proof to the appendix, and conclude now the proof of Proposition \ref{bilinear2prop}.
We thus use inequality (\ref{measure3}) with $J_1$ to obtain:
\begin{align*}
I_{N_0,N_1,N_2}^{L_0,L_1,L_2} \lesssim \frac{1 }{N_1^{\frac{1}{2}-\rho}L_0^{-b'} L_1^{b-\frac{1}{2}} L_2^{b-\frac{1}{2}}} \left\|  P_{N_1} Q_{L_1} u_1 \right\|_{L^2} \left\| P_{N_2} Q_{L_2} v_2 \right\|_{L^2} \| P_{N_0} Q_{L_0} w_0 \| _{L^2},
\end{align*}
and conclude summing over $N_0$, $N_1$, $N_2$, $L_0$, $L_1$ and $L_2$. In the subregion $\xi_1 \xi_2 <0$, $\mu_1 \mu_2<0$, we use the same process with $J_2$ in (\ref{measure3}).

In the regions $\mathcal{D}_2$ and $\mathcal{D}_3$, the estimates are similar: it suffices to establish a change of variable which brings us exactly to the case of $\mathcal{D}_1$. Observe that there is no influence from neither $b$ nor $b'$.

Finally, in $\mathcal{D}_4$, we use the Fourier multiplier operator $K(\xi,\mu):=\vert 3 \xi^2 -\mu^2 \vert \gtrsim \vert \langle(\xi,\mu) \rangle \vert$, and the estimate (\ref{restriction_theorem}), so that for $\rho < \frac{1}{2}$:
\begin{align*}
I_{N_0, N_1,N_2} 
	& \lesssim N_1^{\rho-\frac{1}{2}} \left\| K(D)^\frac{1}{8} \left( \frac{\f{P_{N_1} u_1}}{\langle \tau_1-\omega_1 \rangle^b} \right)^\vee \right\|_{L^4} \left\| K(D)^\frac{1}{8} \left( \frac{\f{P_{N_2} v_2}}{\langle \tau_2-\omega_2 \rangle^b} \right)^\vee \right\|_{L^4} \| P_{N_0} w_0 \|_{L^2} \\
	& \lesssim \| P_{N_0} w_0 \|_{L^2} \|P_{N_1} u_1 \|_{L^2} \| P_{N_2} v_2 \|_{L^2}. \qedhere
\end{align*}
\end{proof}

\section{Growth of Sobolev norms}

We start this section with a local well posedness result where we carefully track the dependency of the existence time: it is crucial that it only depends on $\| u_0 \|_{H^1}$. 

In previous results set in the context of Bourgain spaces, the proofs were made with dependency on $\| u_0 \|_{H^s}$: for instance, in Kenig-Ponce-Vega \cite{KPV96} (where they assume $\| u_0 \|_{H^s} \le 1$ and then use a scaling argument) or Molinet Pilod \cite{MP}. This is very good for low $s<1$, but does not quite give a suitable result in our perspective of large $s >1$. This is why we provide a full proof below, using in particular the tame bilinear estimate \eqref{bilinear3} (which is only relevant for $s >1$).

\bigskip

Throughout the remainder of this section we fix
\[ s>1, \quad \text{and} \quad \delta \in \left( 0, \frac{1}{12} \right). \]
(In particular, the constant $C_0$ and the function $T$ below depend on $s$ and $\delta$).

\begin{prop} \label{theo_point_fixe}
There exists a universal constant $C_0>0$ such that if we define
\begin{align} \label{T}
\forall A >0, \quad T(A):= \frac{1}{(8C_0^2 A)^{\frac{1}{\delta}}},
\end{align}
there exists a unique solution $u \in \mathcal{C}([-T(\| u_0 \|_{H^1}), T(\| u_0 \|_{H^1})],H^s)$ of (\ref{ZK}) with initial condition $u(0)=u_0$. Furthermore, $u$ satisfies the estimates :
\begin{align}\label{Hs_Xsb}
\| u \|_{\mathcal{C}([-T(\| u_0 \|_{H^1}), T(\| u_0 \|_{H^1})],H^s)} + \| \phi_{T(\| u_0 \|_{H^1})}u\|_{X^{s,\frac{1}{2}+\delta}} \le C_0 \|u_0\|_{H^s} \text{ and }\| \phi_{T(\| u_0 \|_{H^1})}u\|_{X^{1,\frac{1}{2}+\delta}} \le C_0 \|u_0\|_{H^1}.
\end{align}
\end{prop}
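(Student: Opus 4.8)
The plan is to construct the solution as a fixed point of the truncated Duhamel operator $\Psi$ defined in \eqref{Psi_1}, in the Banach space
\[
E := \left\{ w \in X^{s,\frac{1}{2}+\delta} : \| w \|_{X^{s,\frac{1}{2}+\delta}} \le 2 C_0 \| u_0 \|_{H^s} \text{ and } \| w \|_{X^{1,\frac{1}{2}+\delta}} \le 2 C_0 \| u_0 \|_{H^1} \right\},
\]
equipped with the $X^{s,\frac{1}{2}+\delta}$ norm. The constant $C_0$ will be chosen to dominate all the implied constants appearing in the linear estimates \eqref{inclusion}, \eqref{autre_inclusion} and the bilinear estimates \eqref{bilinear1}, \eqref{bilinear3}. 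First I would estimate $\| \Psi(w) \|_{X^{s,\frac{1}{2}+\delta}}$: the linear term $\phi_1 W(t) u_0$ is controlled by \eqref{inclusion} with $b = \frac{1}{2}+\delta$, yielding a bound $\lesssim \| u_0 \|_{H^s}$ (here $T \le 1$ so $T^{\frac{1}{2}-b} = T^{-\delta} \ge 1$, but with the fixed cutoff $\phi_1$ the factor is harmless). For the Duhamel term I would apply \eqref{autre_inclusion} with $b = \frac{1}{2}+\delta$, $b' = -\frac{1}{2}+2\delta$ (so that $1-b+b' = \delta$), giving a gain $T^\delta$, followed by the tame bilinear estimate \eqref{bilinear3} applied to $\partial_x(\phi_T w)^2$.

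The crux is the \emph{tame} structure. Using \eqref{bilinear3} I would bound the Duhamel contribution to the $X^{s,\frac{1}{2}+\delta}$ norm by
\[
C_0 T^\delta \left( \| \phi_T w \|_{X^{s,\frac{1}{2}+\delta}} \| \phi_T w \|_{X^{1,\frac{1}{2}+\delta}} \right) \lesssim C_0 T^\delta \cdot 2 C_0 \| u_0 \|_{H^s} \cdot 2 C_0 \| u_0 \|_{H^1},
\]
and symmetrically control the $X^{1,\frac{1}{2}+\delta}$ norm using \eqref{bilinear1} at regularity $1$, getting $\lesssim C_0 T^\delta (2C_0 \| u_0 \|_{H^1})^2$. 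The definition $T(A) = (8 C_0^2 A)^{-1/\delta}$ is engineered precisely so that $T^\delta = (8 C_0^2 \| u_0 \|_{H^1})^{-1}$, which makes the quadratic Duhamel term in both norms strictly smaller than the linear term: the $H^1$ factor $\| u_0 \|_{H^1}$ produced by the bilinear estimate is exactly absorbed by $T^\delta$, leaving room to close the estimate with the chosen radii. This is the heart of the argument and the reason the existence time depends \emph{only} on $\| u_0 \|_{H^1}$ and not on $\| u_0 \|_{H^s}$; the tame bilinear estimate \eqref{bilinear3}, which trades one power of $s$-regularity for $H^1$-regularity, is indispensable here. I would verify that $\Psi$ maps $E$ into itself, checking both radius constraints simultaneously.

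For contraction, I would estimate $\| \Psi(w) - \Psi(\tilde w) \|_{X^{s,\frac{1}{2}+\delta}}$ using the bilinearity: $w^2 - \tilde w^2 = (w - \tilde w)(w + \tilde w)$, so that \eqref{autre_inclusion} together with \eqref{bilinear3} gives a factor $C_0 T^\delta \| w + \tilde w \|_{X^{1,\frac{1}{2}+\delta}} \lesssim C_0 T^\delta \cdot 4 C_0 \| u_0 \|_{H^1} \le \tfrac{1}{2}$ by the same choice of $T$, yielding a contraction. The Banach fixed point theorem then produces the unique solution $u \in E$, and $\Psi(u) = u$ restricted to $[-T,T]$ solves \eqref{ZK} by the remark following \eqref{Psi_1}. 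Finally, continuity in time $u \in \mathcal C([-T,T], H^s)$ and the pointwise bound on $\| u(t) \|_{H^s}$ follow from the embedding \eqref{embedding2}, $\| u(t) \|_{H^s} \lesssim \| u \|_{X^{s,\frac{1}{2}+\delta}}$, which also yields the first term in \eqref{Hs_Xsb}.

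The main obstacle I anticipate is the bookkeeping needed to close \emph{both} inequalities in \eqref{Hs_Xsb} simultaneously with a single universal $C_0$ and a single $T$ depending only on the $H^1$ norm: one must be careful that the $X^{1,\frac{1}{2}+\delta}$ estimate closes using only \eqref{bilinear1} (which is legitimate at $s = 1 > \frac12$ after shrinking $\delta$ if needed), \emph{decoupled} from the $X^{s,\frac{1}{2}+\delta}$ estimate, so that the $H^1$ radius is controlled autonomously and can then be fed back into the tame estimate for the higher norm. A secondary technical point is the treatment of the cutoff $\phi_T^2$ inside the Duhamel integral versus the outer $\phi_T$, and the use of \eqref{differentb} to reconcile the various values of $b$ when passing between the hypotheses of \eqref{autre_inclusion} and the bilinear estimates.
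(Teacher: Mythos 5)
Your overall scheme --- fixed point for $\Psi$ on the ball defined by the two constraints $\| w \|_{X^{s,b}} \le 2C_0 \| u_0 \|_{H^s}$, $\| w \|_{X^{1,b}} \le 2C_0 \| u_0 \|_{H^1}$ (with $b = \frac{1}{2}+\delta$), the tame estimate \eqref{bilinear3} for the high norm, the estimate \eqref{bilinear1} at regularity $1$ for the low norm, and the choice $T^\delta = (8C_0^2 \| u_0 \|_{H^1})^{-1}$ --- is exactly the paper's. But your contraction step has a genuine gap. You estimate $\| \Psi(w) - \Psi(\tilde w) \|_{X^{s,b}}$ and claim that \eqref{autre_inclusion} together with \eqref{bilinear3} yields the Lipschitz factor $C_0 T^\delta \| w + \tilde w \|_{X^{1,b}}$. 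That is not what the tame estimate gives: applied to $\partial_x \bigl( (w-\tilde w)(w+\tilde w) \bigr)$ it produces
\begin{equation*}
\| w - \tilde w \|_{X^{s,b}} \, \| w + \tilde w \|_{X^{1,b}} + \| w - \tilde w \|_{X^{1,b}} \, \| w + \tilde w \|_{X^{s,b}},
\end{equation*}
and the second (cross) term carries $\| w + \tilde w \|_{X^{s,b}} \le 4C_0 \| u_0 \|_{H^s}$. Since your $T$ depends only on $\| u_0 \|_{H^1}$, the resulting contraction constant is of size $T^\delta \bigl( \| u_0 \|_{H^1} + \| u_0 \|_{H^s} \bigr)$ and cannot be made less than $1$ when $\| u_0 \|_{H^s} \gg \| u_0 \|_{H^1}$ --- which is precisely the regime the proposition is designed for. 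So contraction in the $X^{s,b}$ metric, as you set it up, fails.

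The paper's fix is to equip the ball $B$ with the \emph{weaker} distance $d(v,w) := \| v - w \|_{X^{1,b}}$: $(B,d)$ is still a complete metric space (the $X^{s,b}$-bounded ball is closed under $X^{1,b}$-convergence, by weak lower semicontinuity of the norm), and the contraction estimate then only requires \eqref{bilinear1} at regularity $1$, giving
\begin{equation*}
\| \Psi(w) - \Psi(\tilde w) \|_{X^{1,b}} \le 4 C_0^2 T^\delta \| u_0 \|_{H^1} \| w - \tilde w \|_{X^{1,b}} \le \tfrac{1}{2} \| w - \tilde w \|_{X^{1,b}},
\end{equation*}
with no $H^s$ quantity in the Lipschitz constant; the fixed point lies in $B$ and so inherits the $X^{s,b}$ bound. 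With this modification the rest of your argument (self-mapping in both norms, the choice of $T$, and the $H^s$ bound from \eqref{embedding2}) goes through as you describe. One smaller point: the uniform-in-time embedding \eqref{embedding2} gives the $L^\infty_t H^s$ bound but not continuity in $t$; the paper proves continuity separately from the Duhamel formula using \eqref{autre_inclusion} and \eqref{bilinear1}, and you would need that step (or the standard fact that $X^{s,b} \hookrightarrow \mathcal{C}(\mathbb{R}_t, H^s)$ for $b > \frac{1}{2}$) to conclude $u \in \mathcal{C}([-T,T],H^s)$.
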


\begin{proof}
We consider the initial condition as $u_0$, with a finite norm in $H^s$, $s \ge 2$. We work on
\begin{align*}
B:=\left\{ w \in X^{s,b}; \| w \|_{X^{s,b}} \le 2C_0 \|u_0 \|_{H^s} \text{ and } \|w \|_{X^{1,b}} \le 2C_0 \| u_0 \|_{H^1}\right\},
\end{align*}
with the distance $d(v,w) := \| v-w \|_{X^{1,b}}$. $(B,d)$ is a complete metric space.

Let $T>0$ to be chosen later. We want to apply the fixed point theorem to the function $\Psi$ defined in (\ref{Psi_1}) on the space $B$. Let $\delta$ be given as in the bilinear estimates (\ref{bilinear1}) and (\ref{bilinear3}), $b:=\frac{1}{2}+ \delta$ and $b' :=2b-\frac{3}{2}=-\frac{1}{2}+2\delta$. Using lemma \ref{lemma5} and proposition \ref{propo8} :
\begin{align*}
\| \Psi(w)\|_{X^{s,b}} 
	& \le \| \phi_1(t) W(t) u_0 \|_{X^{s,b}} + \left\| \phi_T(t) \int_0^t W(t-t') \phi_T(t')^2\partial_x \left( w^2  \right)(t')dt'\right\|_{X^{s,b}}\\
	& \le C_0  \left( \|u_0\|_{H^s} + T^{1-b+b'} \left\| \partial_x \left(\left( \phi_T(t)w\right)^2 \right) \right\|_{X^{s,b'}} \right) \\
	& \le C_0 \|u_0\|_{H^s} \left( 1+ 4C_0^2 T^\delta \| u_0 \|_{H^1}\right),
\end{align*}
and similarly, (increasing the constant $C_0$ if necessary),
\begin{align*}
\| \Psi(w)\|_{X^{1,b}} \le C_0  \|u_0\|_{H^1} \left(1+ 4 T^\delta C_0^2\| u_0 \|_{H^1}\right).
\end{align*}
Furthermore, the function $\Psi$ is a contraction on $(B,d)$:
\begin{align*}
\| \Psi(\omega)-\Psi(\widetilde{\omega}) \|_{X^{1,b}} \le C_0 T^\delta \| \omega-\widetilde{\omega} \|_{X^{1,b}} \| \omega + \widetilde{\omega} \|_{X^{1,b}} \le 4 C_0^2 T^\delta \|u_0\|_{H^1}\| \omega-\widetilde{\omega} \|_{X^{1,b}}.
\end{align*}

Hence, by choosing $T>0$ such that $T^\delta =\frac{1}{8C_0^2 \|u_0 \|_{H^1}}$, we can apply the fixed point theorem. Hence we obtain 
\begin{align*}
\| \phi_T u \|_{X^{s,b}} \le C_0 \|u_0 \|_{H^s}.
\end{align*}
The $\| u \|_{\mathcal{C}([-T(\| u_0 \|_{H^1}), T(\| u_0 \|_{H^1})],H^s)}$ bound is immediate from \eqref{embedding2}.

Finally, the proof of continuity of the unique solution $u \in \mathcal{C}([0,T],H^s)$ is inspired by Kenig-Ponce-Vega \cite{KPV96}. We consider continuity at $0$, the other points are similar. Fix $t > 0$. Then the Duhamel formula can be rewritten
\[ u(t) = W(t) u_0 + \int_0^t W(t-t')  \partial_x \left( (\phi_t(t')u(t'))^2 \right) dt'. \]
We use successively embedding (\ref{embedding2}), estimate (\ref{autre_inclusion}) and estimate (\ref{bilinear1}):
\begin{align*}
\left\| u(t)-u(0) \right\|_{H^s} 
	& \le \left\| W(t) u(0)-u(0) \right\|_{H^s} + \left\| \int_0^t W(t-t')  \partial_x \left( (\phi_t(t')u(t'))^2 \right) dt' \right\|_{H^s} \\
	& \le \left\| W(t) u(0)-u(0) \right\|_{H^s} + C(b) \left\| \int_0^t W(t-t')  \partial_x \left( (\phi_t(t')u(t'))^2 \right) dt' \right\|_{X^{s,b}} \\
	& \le \left\| W(t) u(0)-u(0) \right\|_{H^s} + C(b) t^{1-b+b'} \left\|   \partial_x \left( (\phi_t u)^2 \right) \right\|_{X^{s,b'}} \\
	& \le \left\| W(t) u(0)-u(0) \right\|_{H^s} + C(b,s) t^{\delta} \left\|  \phi_t u  \right\|_{X^{s,b}}^2,
\end{align*}
which tends to $0$ as $t \to 0$. This proves continuity at $0$ in $H^s$.
\end{proof}

We will now prove Theorem \ref{th1}, and for this assume that $s>1$ is an integer.

\begin{proof}[Proof of Theorem \ref{th1}]
We denote $u \in \mathcal C(I, H^s)$ to be the maximal $H^s$ development of $u_0$ under the \eqref{ZK} flow. Denote 
\[ A := \sup_{t \in I} \| u(t) \|_{H^1} \le \sqrt{C(E(u_0) + M(u_0)^2)}. \]
(See \eqref{est:H1_ME}). The equation (\ref{ZK}) is reversible, it suffices to prove \eqref{control_poly} for positive times $t \ge 0$. 

Proposition \ref{theo_point_fixe} yields that for all $t_0 \in I$, $[t_0 - T(A), t_0+T(A)] \subset I$, and
\[ \forall t \in [t_0 - T(A), t_0+T(A)], \quad \| u(t) \|_{H^s} \le C_0 \| u(t_0) \|_{H^s}. \]
This shows by iteration that $I = \mathbb R$, and that for all $k \in \mathbb N$,
\[ \forall t \in [kT(A), (k+1) T(A)], \quad \| u(t) \|_{H^s} \le C_0^{k+1} \| u_0 \|_{H^s}. \]
From this, we infer the exponential bound $\| u(t) \|_{H^s} \le \exp(C t)$ for some large constant $C$ and all $t \ge 0$.

To improve this to a polynomial bound, we will follow the idea of  Staffilani \cite{Sta} to obtain more refined inequality, namely
\begin{align}\label{ineq}
\forall t \in [ 0, T(\| u_0 \|_{H^1})], \quad \| u(t) \|_{H^s}^2-\| u(0)\|_{H^s}^2 \le C(\| u_0 \|_{H^1}) (1+\|u_0 \|_{H^s}^{2-\epsilon}),
\end{align}
for some $\epsilon$ close to $0$ and some function $C: [0,+\infty) \to [0,+\infty)$ . 

This will imply that for all $t_0 \in \mathbb R$, and for some uniform constant $C_1 \ge 1$ (depending on $A$ and $C$)
\[ \forall t \in [t_0 - T(A), t_0 + T(A)], \quad  \| u(t) \|_{H^s}^2 \le \| u(t_0)\|_{H^s}^2 + C_1 (1+ \| u(t_0) \|_{H^s}^{2-\epsilon}). \]
Therefore, denoting $u_k = \sup_{t \in [kT(A),(k+1) T(A)]} \| u(t) \|_{H^s}$, then (with $t_0 = kT$)
\[ \forall k \in \mathbb N, \quad u_{k+1} \le \left(u_k^2 + C_1 (1+ u_k^{2- \epsilon}) \right)^{1/2} \le u_k + C_1 (1+ u_k^{1- \epsilon}). \]
As observed in \cite{Sta}, an iteration argument then shows that $u_k$ has polynomial growth: this is the content of the following lemma.
 
 \begin{lemm} \label{lem13}
 Let $(u_k)_{k \in \mathbb N}$ be a sequence of non negative numbers, $K_1>0$, $\epsilon \in (0,1)$ such that
 \[ \forall k \in \mathbb N, \quad u_{k+1} \le u_k + K_1(1+ u_k^{1-\epsilon}). \]
 Then for any $d > \frac{1}{\epsilon}$, there exist a constant $K_2=K_2(K_1,d)$ such that
 \[ \forall k \in \mathbb N, \quad u_{k+1} \le K_2 (1+k)^{d} (1+u_0). \]
 \end{lemm}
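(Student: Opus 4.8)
The plan is to prove Lemma \ref{lem13} by a discrete comparison argument: I want to show that the recursion $u_{k+1} \le u_k + K_1(1+u_k^{1-\epsilon})$ forces $u_k$ to grow no faster than a power $k^d$ with $d > 1/\epsilon$. The essential heuristic is that when $u_k$ is large, the increment is of order $u_k^{1-\epsilon}$, so heuristically $u_k$ behaves like the solution of the ODE $\dot{x} = K_1 x^{1-\epsilon}$, whose solution is $x(t) \sim (K_1 \epsilon t)^{1/\epsilon}$; this is exactly why the critical exponent is $1/\epsilon$, and why any $d>1/\epsilon$ will work with room to spare.

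First I would reduce to the case where $u_k$ is bounded below by a convenient large constant. Concretely, I would seek a constant $K_2$ and check the claim $u_k \le K_2(1+k)^d(1+u_0)$ by induction on $k$. The base case $k=0$ holds as soon as $K_2 \ge 1$. For the inductive step, assuming $u_k \le K_2(1+k)^d(1+u_0)=:M_k$, I would use the recursion to write
\begin{align*}
u_{k+1} \le M_k + K_1 + K_1 M_k^{1-\epsilon},
\end{align*}
and I want to show this is at most $M_{k+1} = K_2(2+k)^d(1+u_0)$. The key is to estimate the gap $M_{k+1}-M_k$ from below. Since $(2+k)^d - (1+k)^d \ge d(1+k)^{d-1}$ by the mean value theorem (as $x \mapsto x^d$ is convex with increasing derivative, the increment dominates $d(1+k)^{d-1}$), we get
\begin{align*}
M_{k+1}-M_k \ge K_2 d (1+k)^{d-1}(1+u_0).
\end{align*}
So it suffices to arrange $K_1 + K_1 M_k^{1-\epsilon} \le K_2 d(1+k)^{d-1}(1+u_0)$.

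The crux is therefore to bound $M_k^{1-\epsilon}$ against $(1+k)^{d-1}$. Using $M_k^{1-\epsilon} = K_2^{1-\epsilon}(1+k)^{d(1-\epsilon)}(1+u_0)^{1-\epsilon} \le K_2^{1-\epsilon}(1+k)^{d(1-\epsilon)}(1+u_0)$, the requirement becomes essentially $K_1 K_2^{1-\epsilon}(1+k)^{d(1-\epsilon)} \lesssim K_2 d(1+k)^{d-1}$, plus the additive $K_1$ term. The point where $d > 1/\epsilon$ enters is precisely the exponent comparison: $d(1-\epsilon) \le d-1$ is equivalent to $d\epsilon \ge 1$, i.e. $d \ge 1/\epsilon$, and the strict inequality $d>1/\epsilon$ gives $d(1-\epsilon) < d-1$, so the power of $(1+k)$ on the left is strictly smaller and the inequality holds for all $k$ once $K_2$ is chosen large depending on $K_1$ and $d$. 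I expect the main (though purely technical) obstacle to be organizing the choice of $K_2$ so that both the multiplicative term $K_1 M_k^{1-\epsilon}$ and the additive constant $K_1$ are simultaneously absorbed into $K_2 d (1+k)^{d-1}(1+u_0)$ uniformly in $k$; since $K_2^{1-\epsilon}/K_2 = K_2^{-\epsilon} \to 0$ as $K_2 \to \infty$, taking $K_2$ large enough (as a function of $K_1$ and $d$ only) closes the induction, which is exactly the dependence claimed.
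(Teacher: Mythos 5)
Your proof is correct, and it rests on the same skeleton as the paper's own argument --- induction on $k$ against the barrier $M_k := K_2(1+k)^d(1+u_0)$, with the gap $M_{k+1}-M_k \ge K_2\, d\,(1+k)^{d-1}(1+u_0)$ supplied by exactly the convexity estimate \eqref{est:conv}, and the exponent comparison $d(1-\epsilon) < d-1 \iff d\epsilon > 1$ doing the real work --- but your bookkeeping is sharper at one point, and this buys a genuinely simpler proof. The paper estimates $u_k^{1-\epsilon} \le K_2 (1+k)^{d(1-\epsilon)}$, i.e.\ it discards the gain $K_2^{-\epsilon}$ by bounding $K_2^{1-\epsilon} \le K_2$; consequently its inductive inequality \eqref{def:K2_rec} only holds once $(1+k)^{1-d\epsilon}$ is small, which forces a two-regime structure: a preliminary exponential bound $u_k \le (2K_1+1)^k(1+u_0)$ covers $k \le N$ with the threshold $N \ge (K_1/d)^{1/(d\epsilon-1)}$, and the induction runs only for $k \ge N$, with $K_2 = \max\bigl((2K_1+1)^N, K_1/N^{d-1}\bigr)$. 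You instead keep $M_k^{1-\epsilon} \le K_2^{1-\epsilon}(1+k)^{d(1-\epsilon)}(1+u_0)$, so after splitting the right-hand side in two, the inductive step reduces to $K_1 K_2^{-\epsilon}(1+k)^{1-d\epsilon} \le \frac d2$ and $K_1 \le \frac d2 K_2 (1+k)^{d-1}(1+u_0)$; since $1-d\epsilon<0$ and $d>1/\epsilon>1$ make both powers of $(1+k)$ harmless uniformly in $k \ge 0$, the single choice $K_2 = \max\bigl(1,\, 2K_1/d,\, (2K_1/d)^{1/\epsilon}\bigr)$ closes the induction for all $k$ at once, with no threshold $N$ and no auxiliary exponential bound. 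One cosmetic point for the write-up: the lemma as stated bounds $u_{k+1}$ by $K_2(1+k)^d(1+u_0)$, so after proving $u_k \le K_2(1+k)^d(1+u_0)$ for all $k$ you should enlarge $K_2$ by the factor $2^d$ (the same shift is implicit in the paper's proof).
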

 
 \begin{proof}
Observe that due to convexity of $x \mapsto |x|^d$, for all $k >0$,
 \begin{equation} \label{est:conv}
 (2+k)^d = (1+k)^d \left( 1 + \frac{1}{1+k} \right)^d \ge (1+k)^d + d(1+k)^{d-1}.
 \end{equation}
 Also, as $1+u_{k+1} \le 1+u_k + K_1 (1+u_k^{1-\epsilon}) \le (2K_1+1)(1+u_k)$, we infer that $u_k \le (2K_1+1)^{k} (1+u_0)$ for all $k \in \mathbb N$.
 
 Let $N \in \mathbb N$ such that $N \ge \left( \frac{K_1}{d} \right)^{1/(d\epsilon-1)}$, so that for $k \ge N$ (as $1-d\epsilon <0$)
 \[ d- K_1 (1+k)^{1-d\epsilon} \ge 1. \]
 Let $K_2 := \max \left((2K_1+1)^N, \frac{K_1}{N^{d-1}} \right)$, so that 
 \begin{enumerate}
 \item for $k \le N$, $u_k \le K_2 (1+u_0) \le K_2(1+k)^d (1+u_0)$ and
 \item for $k \ge N$,
 \[ K_2 (1+k)^{d-1} \left( d- K_1 (1+k)^{1-d\epsilon} \right) \ge K_2 (1+k)^{d-1} \ge K_2 N^{d-1} \ge K_1, \]
 and so
 \begin{equation} \label{def:K2_rec}
 K_2 d(1+k)^{d-1} \ge K_1 + K_1 K_2 (1+k)^{d-1+ 1-d\epsilon} \ge K_1 + K_1 K_2 (1+k)^{d(1-\epsilon)}.
 \end{equation}
 \end{enumerate}
We prove now the statement by induction on $k$. For $k =0, \dots, N$, the statement hold due to 1. On the other side, if for some $k \ge N$, there hold $u_k \le  K_2(1+k)^d$ , then using  \eqref{est:conv} and \eqref{def:K2_rec},
 \begin{align*}
 u_{k+1} \le  K_2 (1+k)^{d} + K_1 (1 +  K_2 (1+k)^{d(1-\epsilon)} ) \le K_2 (1+k)^{d} + K_2 d(1+k)^{d-1} \le K_2 (2+k)^d.
 \end{align*}
 This proves the induction step for $k \ge N$, and the proof is complete.
 \end{proof} 
 
 From the above lemma, $u_k \lesssim (1+k)^{\frac{1}{\epsilon}^+} (1+ \| u_0 \|_{H^s})$ (with implicit constant depending on $A$), and from there the polynomial bound \eqref{control_poly} follows immediately with $\beta = \displaystyle \frac{1}{\epsilon}^+$ . We can now focus on proving \eqref{ineq}.

We recall that $T$ has been defined in the previous proposition, and $s\ge 2$ is an integer. Let $t\in [0,T]$, $b$ and $b'$ as in the proposition \ref{bilinear2prop}, satisfying $-b' \le b$. We control the evolution of the $H^s$-norm:
\begin{align*}
\| u(t)\|_{H^s}^2-\|u(0)\|_{H^s}^2  & = \int_0^t \int \langle \vert (\xi,\mu)\vert \rangle^{2s}\frac{d}{dt}\vert \fxy{u}(t') \vert^2 d\xi d\mu dt' \\
	& = \int_0^t \int \langle \vert (\xi,\mu)\vert \rangle^{2s}  \phi_T(t')^3 \left(\fxy{u} \fxy{\partial_x(u^2)} \right)(t') d\xi d\mu dt' \\
	& \simeq \sum_{\vert \bi \vert \le s} \int_0^t \int \fxy{D^\bi \phi_T u} \fxy{ D^\bi \partial_x(\phi_T^2 u^2) }  (t')d\xi d\mu dt'=I_0 + I_{1 \cdots s-1} + I_{s},
\end{align*}
where $I_0$ counts the term with none derivative, $I_s$ the terms with $s$ derivatives, and $I_{1 \cdots s-1}$ the others.

Let first deal with $I_0$, thus the case $\vert \bi \vert=0$. We obtain:
\begin{align*}
I_0= \int _0^t \int \phi_T u \partial_x \left( \phi_T^2 u^2 \right)(t') dxdydt =0.
\end{align*}

Second, let us find an upper bound for $I_{1 \cdots s-1}$, the term for which the sum of the derivatives does not reach $s$.  By a Cauchy-Schwarz inequality, recalling the function $F$ to follow the flow in (\ref{F}), we find the flow of our solution, up to a power $b'$:
\begin{align*}
I_{1 \cdots s-1} & = \sum_{0< \vert \bi \vert < s} \int_0^t \int \fxy{D^\bi \phi_T u} \fxy{ D^\bi \partial_x(\phi_T^2 u^2) }  (t')d\xi d\mu dt' \\
& \le \sum_{0< \vert \bi \vert < s} \int_0^t \left\| F(D)^{-b'} D^\bi (\phi_T u)(t') \right\|_{L^2} \left\| F(D)^{b'} \partial_x D^\bi \left( \phi_T^2 u^2\right)(t')\right\|_{L^2} dt'.
\end{align*}

We can apply estimate (\ref{bilinear1}) to the term on the right because $\vert \bi \vert \ge 1$. We use the estimate (\ref{bilinear1}), a Gagliardo-Nirenberg inequality, and twice estimate (\ref{differentb}):
\begin{align}
\MoveEqLeft
	I_{1 \cdots s-1} \le \sum_{0<\vert \bi \vert < s}\| D^\bi \left(\phi_T u\right) \|_{X^{0,-b'}}  \sum_{(0,0) \le \bj \le \bi } \| \phi_T u \|_{X^{\vert \bj \vert,b}} \|  \phi_T u \|_{X^{\vert\bi-\bj \vert,b}} \nonumber \\
	& \lesssim  \sum_{0< \vert \bi \vert < s} \|\phi_T u\|_{X^{s,-b'}}^{\vert \bi \vert /s} \| \phi_T u\|_{X^{0,-b'}}^{1-\vert \bi \vert/s} \sum_{(0,0) < \bj < \bi } \|\phi_T u\|_{X^{s,b}}^{\vert \bi \vert /s} \max \left( \| \phi_T u\|_{X^{0,b}}^{1-\vert \bj \vert /s} , \| \phi_T u\|_{X^{0,b}} \right) \nonumber \\
	& \le C \max(1,\| \phi_T u\|_{X^{0,b}}^2 ) \max \left( \| \phi_T u \|_{X^{s,b}}^{\frac{2}{s}} , \|\phi_T u\|_{X^{s,b}}^{2(1-\frac{1}{s})}\right). \nonumber
\end{align}

Third, the terms $I_s$ with $\vert \bi \vert=s$. We separate $I_s=I_s^1+I_s^2$, where $I_s^2$ contains the two terms where all the derivatives are distributed on the same term, and $I_s^1$ all the others. Let $0<\rho <\frac{1}{2}$, and $\alpha_\bj:= \frac{\vert \bj \vert -1+\rho}{s-1}$. We deal first with $I_s^1$, the terms where all the derivatives are not on the same term. By using the estimate (\ref{bilinear1}):
\begin{align*}
I_s^1: & = \sum_{\vert \bi \vert =s} \sum_{ (0,0) < \bj < \bi} \iint \left\vert D^\bi \left( \phi_T u \right) \partial_x \left( D^\bj \left( \phi_T u \right) D^{\bi-\bj}\left( \phi_T u \right)\right) \right\vert (t')dxdydt' \\
	& \lesssim \sum_{\vert \bi \vert =s} \sum_{(0,0) < \bj < \bi} \| D^\bi \left(\phi_T u\right) \|_{X^{-\rho,-b'}}  \| D^\bj \left( \phi_T u \right) \|_{X^{\rho,b}} \| D^{\bi-\bj}\left( \phi_T u \right) \|_{X^{\rho,b}} \\
	& \lesssim \| \phi_T u \|_{X^{s-\rho,-b'}} \sum_{(0,0) < \bj < \bi} \| \phi_T u \|_{X^{s,b}} ^{\alpha_\bj} \| \phi_T u \|_{X^{1,b}} ^{1-\alpha_\bj} \| \phi_T u \|_{X^{s,b}} ^{\alpha_{\bi-\bj}} \| \phi_T u \|_{X^{1,b}} ^{1-\alpha_{\bi-\bj}}  \\
	& \lesssim  \| \phi_T u \|_{X^{s-\rho,-b'}} \| \phi_T u \|_{X^{s,b}} ^{1+\frac{2\rho-1}{s-1}} \max( 1,\|\phi_T u \|_{X^{1,b}} ^{\frac{s-2\rho}{s-1}}) \\
	& \le C \max(1, \| \phi_T u \|_{X^{1,b}}^2) \| \phi_T u \|_{X^{s,b}}^{2-\frac{1-\rho}{s-1}},
\end{align*}
where the last line comes from a final interpolation. One can notice that the last estimate is sharp, and that $\frac{1}{2(s-1)}<\frac{1-\rho}{s-1}$.

We can not directly use the bilinear estimate (\ref{bilinear1}) for the term $I_s^2$ where all the derivatives are distributed on one term, due to the lower limit of $s>\frac{1}{2}$. This term is dealt with using estimate (\ref{bilinear2}) as it authorizes a negative index in $X^{-\rho,b}$. Let $\rho>0$ be given by Proposition \ref{bilinear2prop}. We recall the definition of $S$ in (\ref{S}) and the Gagliardo-Nirenberg inequality $\| D^\rho v \|_{L_{x,y}^2} \lesssim \| D^{s-1}v \|_{L_{x,y}^2}^\frac{\rho}{s-1} \| v \|_{L_{x,y}^2}^{1-\frac{\rho}{s-1}}$. Using an integration by parts when needed, we can bound
\begin{align*}
I_s^2 & := \sum_{\vert \bi \vert =s} \iint \left( D^\bi \left( \phi_T u \right) \partial_x \left( \phi_T u D^\bi(\phi_T u)\right) \right) (t')dxdydt' \\
	& \le \sum_{\vert \bi \vert =s} \iint \left\vert \partial_x(\phi_T u) \right\vert \left\vert D^\bi (\phi_T u) \right\vert^2dxdydt'  \le \| \partial_x \phi_T u \|_{X^{\rho,-b'}} \left\| \left( D^\bi (\phi_T u )\right)^2 \right\|_{X^{-\rho,b'}} \\ 
	& \lesssim \| S(D)^\rho \partial_x \phi_T u \|_{X^{0,b}} \left\| S(D)^{s-\rho}(\phi_T u )\right\|_{X^{0,b}}^2  \lesssim \left( \| \phi_T u \|_{X^{1,b}}^{1-\frac{\rho}{s-1}} \| \phi_T u \|_{X^{s,b}}^\frac{\rho}{s-1} \right) \| \phi_T u \|_{X^{s,b}}^{2 \frac{s-\rho-1}{s-1}} \| \phi_T u \|_{X^{1,b}}^{2\frac{\rho}{s-1}} \\
	& \le C \| \phi_T u \|_{X^{1,b}}^{1+\frac{\rho}{s-1}} \| \phi_T u \|_{X^{s,b}}^{2-\frac{\rho}{s-1}}.
\end{align*}

We thus notice that this time, $0< \frac{\rho}{s-1} <\frac{1}{2(s-1)}$.

Summing up the three previous inequalities, we obtain :
\begin{align*}
\| u (t) \|_{H^s}^2 - \| u(0) \|_{H^s}^2 \le C \max( 1, \| \phi_T u \|_{X^{1,b}}^2) \max \left( \| \phi_T u \|_{X^{s,b}}^{\frac{2}{s}}, \| \phi_T u \|_{X^{s,b,}}^{2-\frac{\rho}{s-1}} \right)
\end{align*}
and by the embeddings (\ref{Hs_Xsb}):
\begin{align*}
\| u(t)\|_{H^s} \le \|u(0)\|_{H^s} + C \max (1, C_0 \| u(0) \|_{H^1}) C_0^{1-\frac{\rho}{2(s-1)}} \max \left(\| u(0)\|_{H^s}^{\frac{1}{s}} , \| u(0) \|_{H^s}^{1-\frac{\rho}{2(s-1)}}\right).
\end{align*}
This concludes the proof of the estimate (\ref{ineq}), by choosing $\epsilon= \frac{\rho}{s-1}$ and $\rho \in \left( 0,\frac{1}{2} \right)$.
\end{proof}

\section{Appendix : Proof of lemma \ref{lemma11} }

This lemma is specific to (total) dimension 3. Its proof is almost similar to that of the estimate 5.2 \cite[Proposition 3.6]{MP}, which is slightly different from \eqref{measure3}. We recall the assumption on the coefficients: $N_1$, $N_2$, $L_1$ and $L_2$ be four integers, two functions $u_1$ and $v_2$, and $\frac{N_1}{2} \le N_2 \le 2N_1$.
We define the subsets:
\begin{align*}
S_1(\xi_1,\xi_2, \mu_1, \mu_2) & :=\left\{ \xi_1 \xi_2 >0 ,\mu_1 \mu_2 <0 \right\} \cup \left\{ \xi_1 \xi_2 <0 , \mu_1 \mu_2 >0 \right\}, \\
S_2(\xi_1,\xi_2, \mu_1, \mu_2) & :=\left\{ \xi_1 \xi_2 <0 ,\mu_1 \mu_2 <0 \right\},
\end{align*}
and the two Fourier multipliers $(J_i)_{1 \le i \le 2}$, by:
\begin{align*}
\f{J_i(u_1,v_2)}(\tau, \xi, \mu):= \int_{\mathbb{R}^3} \mathbf{1}_{S_i (\xi_1,\xi-\xi_1, \mu_1, \mu-\mu_1)} \f{u_1}(\tau_1, \xi_1, \mu_1) \f{v_2}(\tau-\tau_1, \xi-\xi_1,\mu-\mu_1)d\tau_1d\xi_1 d\mu_1.
\end{align*}
We want to prove that:
\begin{align}\label{measure4}
\left\| J_i \left( P_{N_1} Q_{L_1} u_1, P_{N_2} Q_{L_2} v_2 \right) \right\|_{L^2} \lesssim N_1^{-\frac{1}{2}} (L_1L_2)^\frac{1}{2} \left\| P_{N_1} Q_{L_1} u_1 \right\|_{L^2} \left\|P_{N_2} Q_{L^2} v_2 \right\|_{L^2}.
\end{align}

We consider $J_1$ first, and explain at the end how $J_2$ will be different. First, the convolution of the functions brings us to define a set on which the functions are not null:
\begin{align*}
A_{(\xi, \mu,\tau)}:= \left\{ 
\begin{array}{l}
(\xi_1,\mu_1,\tau_1) \in \mathbb{R}^3; (\xi_1, \xi-\xi_1, \mu_1, \mu-\mu_1) \in S_1, \\
 \vert (\xi_1, \mu_1) \vert \simeq N_1, \vert (\xi-\xi_1,\mu- \mu_1) \vert \simeq N_2 ,\\
 \vert \tau_1- \omega(\xi_1, \mu_1) \vert \simeq L_1 , \vert \tau-\tau_1 - \omega(\xi-\xi_1, \mu- \mu_1) \vert \simeq L_2
\end{array}  \right\}.
\end{align*}  

We thus get, by Minkowski inequality, and Cauchy-Schwarz inequality:
\begin{align*}
\MoveEqLeft
\|J_1(P_{N_1} Q_{L_1} u_1, P_{N_2} Q_{L_2} v_2) \|_{L^2} \\
	& \le \int \left( \int \mathbf{1}_{ A_{(\xi,\mu,\tau)}}(\xi_1,\mu_1,\tau_1) \f{P_{N_1} Q_{L_1} u_1}(\xi_1,\mu_1,\tau_1)^2 \f{P_{N_2} Q_{L_2} v_2}(\xi-\xi_1,\mu-\mu_1,\tau- \tau_1)^2 d\xi d\mu d\tau \right)^\frac{1}{2}d\xi_1 d\mu_1 d\tau_1 \\
	& \le \| P_{N_1} Q_{L_1}u_1 \|_{L^2} \left( \int \int \mathbf{1}_{ A_{(\xi,\mu,\tau)}}(\xi_1,\mu_1,\tau_1) \vert P_{N_2} Q_{L_2} v_2 (\xi-\xi_1, \mu-\mu_1, \tau-\tau_1) \vert^2 d\xi d\mu d\tau d\xi_1 d\mu_1 d\tau_1 \right)^\frac{1}{2} \\
	& \le \sup_{\xi,\mu,\tau} \vert A_{(\xi,\mu,\tau)} \vert^\frac{1}{2} \| P_{N_1}Q_{L_1} u_1 \|_{L^2} \| P_{N_2} Q_{L_2} v_2 \|_{L^2}.
\end{align*}

To prove (\ref{measure4}), it suffices to bound:
\begin{align} \label{measure5}
\sup_{\xi,\mu, \tau} \vert A_{(\xi,\mu,\tau)} \vert \lesssim \frac{L_1^\frac{1}{2} L_2^\frac{1}{2}}{N_1^\frac{1}{2}}.
\end{align}

To do so, define the resonance function $\H(\xi_1,\xi_2,\mu_1,\mu_2):= \omega( \xi_1+\xi_2, \mu_1 + \mu_2) - \omega(\xi_1, \mu_1)-\omega(\xi_2,\mu_2)$. On the set $A_{(\xi,\mu,\tau)}$, it writes:
\begin{align*}
\H(\xi_1, \xi-\xi_1, \mu, \mu-\mu_1) = \omega(\xi, \mu)- \omega( \xi_1, \mu_1) -\omega(\xi- \xi_1, \mu-\mu_1).
\end{align*}
We thus get a bound on the measure of $A_{(\xi,\mu,\tau)}$:
\begin{align}\label{measure6}
\vert A_{(\xi,\mu,\tau)} \vert \lesssim \min(L_1,L_2) \vert B_{(\xi,\mu,\tau)} \vert, 
\end{align}
with
\begin{align*}
B_{(\xi,\mu,\tau)}:= \left\{ 
\begin{array}{l}
(\xi_1, \mu_1)\in \mathbb{R}^2 ; (\xi_1, \xi-\xi_1, \mu_1, \mu-\mu_1) \in S_1, \\
\vert (\xi_1, \mu_1) \vert \simeq N_1, \vert (\xi-\xi_1,\mu- \mu_1) \vert \simeq N_2 ,\\
\vert \tau-\omega(\xi,\mu)+ \H(\xi_1,\xi-\xi_1, \mu_1,\mu-\mu_1 ) \vert \lesssim \max(L_1,L_2)
\end{array}
\right\}.
\end{align*}

To bound the set $B_{(\xi,\mu,\tau)}$, we define first the set of projection on one variable:
\begin{align*}
B_{(\xi,\mu,\tau)}(\mu_1) := \left\{ \xi_1 \in \mathbb{R}; (\xi_1,\mu_1) \in B_{(\xi,\mu,\tau)}\right\},
\end{align*}
study the set by the variation of $\H$ along $\mu_1$, and conclude by a corollary of the intermediate value theorem:
\begin{lemm}[{\cite[Lemma 3.8]{MP}}]
Let $I$ and $J$ two intervals of $\mathbb{R}$, and $f:I\rightarrow J$ a $\mathcal{C}^1$ function. Then we get:
\begin{align} \label{IVT}
\left\vert \left\{ x \in I ; f(x) \in J \right\} \right\vert \le \frac{\vert J \vert}{\inf_{x \in I} \vert f'(x) \vert}.
\end{align}
\end{lemm}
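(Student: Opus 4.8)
The plan is to reduce \eqref{IVT} to the elementary fact that a $\mathcal{C}^1$ function whose derivative is bounded below in absolute value is bi-Lipschitz from below, and then to measure the preimage of $J$ directly.

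First I would set $m := \inf_{x \in I} |f'(x)|$. If $m = 0$ the right-hand side of \eqref{IVT} is $+\infty$ and there is nothing to prove, so I may assume $m > 0$. Since $f \in \mathcal{C}^1$, the derivative $f'$ is continuous on the interval $I$ and, by hypothesis, never vanishes; by the intermediate value theorem it therefore keeps a constant sign, so that $f$ is strictly monotone on $I$. This upgrades the pointwise lower bound on $|f'|$ to a uniform comparison: for all $x,y \in I$,
\[
|f(x) - f(y)| = \left| \int_y^x f'(t)\, dt \right| = \int_{\min(x,y)}^{\max(x,y)} |f'(t)|\, dt \ge m\, |x - y|,
\]
where the middle equality uses precisely that $f'$ does not change sign.

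Next I would consider $E := \{\, x \in I : f(x) \in J \,\}$. Because $f$ is continuous and strictly monotone, $E = f^{-1}(J) \cap I$ is an interval, so its Lebesgue measure equals its diameter, $|E| = \sup_{x,y \in E} |x-y|$. For any $x,y \in E$ one has $f(x), f(y) \in J$, whence $|f(x) - f(y)| \le |J|$; combined with the lower bound above this gives $m\,|x-y| \le |J|$. Taking the supremum over $x,y \in E$ yields $m\,|E| \le |J|$, that is $|E| \le |J|/m$, which is exactly \eqref{IVT}.

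The only point requiring care — and hence the main, if modest, obstacle — is the passage from the pointwise estimate $|f'| \ge m$ to the uniform comparison $|f(x)-f(y)| \ge m\,|x-y|$ valid for \emph{all} pairs in $I$, together with the attendant claim that $E$ is an interval. Both hinge on the constancy of the sign of $f'$: without monotonicity one could only compare neighbouring points, and $f^{-1}(J)$ could fail to be connected, breaking the identification of measure with diameter. Since $f'$ is continuous and nonvanishing its constant sign is immediate, so this subtlety dissolves and the argument closes.
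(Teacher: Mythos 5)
Your proof is correct: the paper itself states this lemma without proof, simply citing \cite[Lemma 3.8]{MP}, and your argument (constant sign of $f'$ via continuity on the interval, hence strict monotonicity, the lower bound $|f(x)-f(y)| \ge m|x-y|$, and the identification of the connected preimage's measure with its diameter) is precisely the standard proof of that result. One cosmetic remark: as stated the hypothesis $f\colon I \to J$ would make the set equal to all of $I$; you correctly read the intended statement, namely $f\colon I \to \mathbb{R}$ with the set being $f^{-1}(J)$, and your handling of the degenerate case $\inf_{x\in I}|f'(x)| = 0$ is the right convention.
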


Lets compute the variations of the function of resonance. The derivative of $\H$ can be bounded from below:
\begin{align} \label{derivative}
\left\vert \frac{d}{d\mu_1} \H(\xi_1, \xi-\xi_1, \mu_1, \mu-\mu_1) \right\vert = \left\vert 2\xi_1 \mu_1 -2(\xi-\xi_1)(\mu-\mu_1) \right\vert \gtrsim \max(N_1,N_2)^2 \simeq N_1^2.
\end{align}

We thus get a bound on the measure of $B_{(\xi,\mu,\tau)}$, by applying (\ref{IVT}) to (\ref{derivative}):
\begin{align}\label{measure7}
\vert B_{(\xi,\mu,\tau)} \vert \le \int_{\vert \mu_1 \vert \le 2N_1, \vert \mu- \mu_1 \vert \le 2N_2} \vert B_{(\xi,\mu,\tau)}(\mu_1) \vert d\mu_1 \lesssim \max(N_1,N_2) \frac{\max(L_1,L_2)}{N_1^2}\simeq \frac{\max(L_1,L_2)}{N_1}.
\end{align}

Gathering the bounds (\ref{measure6}) and (\ref{measure7}), we conclude the proof (\ref{measure5}).

The assumption of working in $S_1$ was used in (\ref{derivative}), where the sign condition was essential. 

For $J_2$, we work on $S_2$: the result is similar, except that we differentiate $\H$ in the $\xi_1$ direction:
\begin{align}\label{measure8}
\left\vert \frac{d}{d\xi_1} \H(\xi_1, \xi-\xi_1, \mu_1, \mu- \mu_1) \right\vert = \left\vert 3 \xi_1^2 + \mu_1^2 -3 (\xi-\xi_1)^2 -(\mu-\mu_1) \right\vert.
\end{align}

Without any assumption on the signs of the variables, we need to localize the different frequencies. The result is immediate from the following lemma:
\begin{lemm}[{\cite[Lemma 4.2]{MP}}]
Let $0<\alpha<1$. There exists a continuous function $f:[0,1] \rightarrow \mathbb{R}$ satisfying $f(\alpha) \rightarrow 0$ as $\alpha \rightarrow 0$, such that for all $(\xi_1,\mu_1)$ and $(\xi_2, \mu_2)$ satisfying:
\begin{align*}
\xi_1 \xi_2 <0, \quad \mu_1 \mu_2 <0, \quad (1-\alpha)^\frac{1}{2} \sqrt{3} \vert \xi_i \vert \le \vert \mu_i \vert \le (1-\alpha)^{-\frac{1}{2}} \sqrt{3}\vert \xi_i \vert,
\end{align*}
we obtain:
\begin{align*}
\vert (\xi_1 + \xi_2, \mu_1+ \mu_2) \vert^2 \le \left\vert \vert (\xi_1, \mu_1) \vert ^2 - \vert (\xi_2, \mu_2) \vert^2 \right\vert + f(\alpha) \max\left( \vert (\xi_1, \mu_1) \vert^2, \vert (\xi_2, \mu_2) \vert^2 \right).
\end{align*}
We recall that $\vert (\xi_1, \mu_1) \vert^2 = 3 \xi_1^2 +\mu_1^2 \simeq N_1^2$.
\end{lemm}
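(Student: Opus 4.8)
The plan is to expand the anisotropic norm explicitly and let the two sign conditions produce the cancellation. Writing $a := |(\xi_1,\mu_1)|^2 = 3\xi_1^2+\mu_1^2$ and $b := |(\xi_2,\mu_2)|^2 = 3\xi_2^2+\mu_2^2$, a direct computation gives
\[
|(\xi_1+\xi_2,\mu_1+\mu_2)|^2 = a + b + 6\xi_1\xi_2 + 2\mu_1\mu_2 .
\]
Since the asserted inequality is symmetric under exchanging the indices $1$ and $2$, I would assume without loss of generality that $a \ge b$; then $\big||(\xi_1,\mu_1)|^2-|(\xi_2,\mu_2)|^2\big| = a-b$ and the maximum on the right equals $a$, so the whole statement reduces to proving $2b + 6\xi_1\xi_2 + 2\mu_1\mu_2 \le f(\alpha)\,a$.

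Next I would pass to magnitudes, setting $p=|\xi_1|,\ q=|\xi_2|,\ r=|\mu_1|,\ s=|\mu_2|$. The hypotheses $\xi_1\xi_2<0$ and $\mu_1\mu_2<0$ give $\xi_1\xi_2=-pq$ and $\mu_1\mu_2=-rs$, so the quantity to bound becomes
\[
2b + 6\xi_1\xi_2 + 2\mu_1\mu_2 = 6q^2 + 2s^2 - 6pq - 2rs = 6q(q-p) + 2s(s-r).
\]
Here the sign conditions are exactly what make the cross terms subtract rather than add. It remains to compare this with $a$, using the near-diagonal pinch $(1-\alpha)^{1/2}\sqrt3\,|\xi_i| \le |\mu_i| \le (1-\alpha)^{-1/2}\sqrt3\,|\xi_i|$, which forces $r\approx\sqrt3\,p$ and $s\approx\sqrt3\,q$ with a multiplicative error controlled by $\alpha$.

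Concretely, I would insert the constraint bounds $2s^2 \le 6q^2/(1-\alpha)$ and $rs \ge 3(1-\alpha)pq$ to obtain
\[
6q(q-p) + 2s(s-r) \le 6(2-\alpha)\,q\left(\frac{q}{1-\alpha}-p\right).
\]
The ordering $a\ge b$, together with $r^2\le 3p^2/(1-\alpha)$ and $s^2\ge 3(1-\alpha)q^2$, yields simultaneously $p \ge (1-\alpha)^{1/2}q$ and $q^2 \le a/\big(3(2-\alpha)\big)$. Substituting the lower bound on $p$ into the parenthesis and then using the bound on $q^2$ gives
\[
6(2-\alpha)\,q\left(\frac{q}{1-\alpha}-p\right) \le 6(2-\alpha)\,q^2\left(\frac{1}{1-\alpha}-\sqrt{1-\alpha}\right) \le 2\left(\frac{1}{1-\alpha}-\sqrt{1-\alpha}\right)a .
\]
One may therefore take $f(\alpha) = 2\big(\tfrac{1}{1-\alpha}-\sqrt{1-\alpha}\big)$ on a neighbourhood of $0$ (extended to a continuous function on all of $[0,1]$), which visibly tends to $0$ as $\alpha\to0$.

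The computation is elementary, so the only genuine difficulty is conceptual: recognizing that the two sign conditions turn $6\xi_1\xi_2+2\mu_1\mu_2$ into a strict loss that, on the exact diagonal $|\mu_i|=\sqrt3\,|\xi_i|$, cancels the leading part of $2b$ and leaves only an $O(\alpha)$ remainder. The one place where care is needed is the final step: bounding $\tfrac{q}{1-\alpha}-p$ crudely by $\tfrac{q}{1-\alpha}$ would destroy the gain, so one must keep the lower bound $p\ge(1-\alpha)^{1/2}q$ extracted from $a\ge b$.
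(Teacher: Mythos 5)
Your proof is correct. Note first that the paper does not actually prove this lemma---it is quoted verbatim from Molinet--Pilod \cite[Lemma 4.2]{MP} with no argument supplied---so your computation is a self-contained verification rather than a variant of anything in the paper, and it follows the same direct expand-and-compare route one finds in \cite{MP}. All the key steps check: the expansion $\vert(\xi_1+\xi_2,\mu_1+\mu_2)\vert^2=a+b+6\xi_1\xi_2+2\mu_1\mu_2$; the symmetric reduction (WLOG $a\ge b$) to $2b-6pq-2rs\le f(\alpha)a$; the algebraic identity $6q(q-p)+\frac{6q^2}{1-\alpha}-6(1-\alpha)pq=6(2-\alpha)q\left(\frac{q}{1-\alpha}-p\right)$ (since $1+\frac{1}{1-\alpha}=\frac{2-\alpha}{1-\alpha}$); and the two consequences of $a\ge b$ under the pinch, namely $3p^2\frac{2-\alpha}{1-\alpha}\ge a\ge b\ge 3(2-\alpha)q^2$, which give exactly $p\ge(1-\alpha)^{1/2}q$ and $q^2\le\frac{a}{3(2-\alpha)}$. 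Your closing remark is also the right diagnosis: discarding $-p$ would lose the cancellation, and keeping $p\ge(1-\alpha)^{1/2}q$ is what yields the explicit rate $f(\alpha)=2\left(\frac{1}{1-\alpha}-\sqrt{1-\alpha}\right)=O(\alpha)$, which is sharper than the merely qualitative statement. One cosmetic point: this $f$ blows up as $\alpha\to1$, so the phrase ``extended to a continuous function on all of $[0,1]$'' needs a word of justification, since the inequality must hold for the given $\alpha$, not only near $0$. The fix is trivial: under the sign hypotheses one has unconditionally $2b-6pq-2rs\le 2b\le 2\max(a,b)$, so $f\equiv2$ always works, and $f(\alpha)=\min\left\{2\left(\frac{1}{1-\alpha}-\sqrt{1-\alpha}\right),\,2\right\}$ is continuous on $[0,1]$, vanishes at $0$, and satisfies the inequality for every $\alpha\in(0,1)$. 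For the paper's purposes this is immaterial anyway, since the lemma is only invoked for $\alpha$ small enough that $f(\alpha)\le\frac{1}{10}$.
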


On $\mathcal{D}_{1,3}$ and $S_2$, the assumptions of the lemma are satisfied, and with $\alpha>0$ so small that $f(\alpha) \le \frac{1}{10}$, we deduce:
\begin{align*}
\frac{9}{10} \vert (\xi,\mu) \vert ^2 \le \left\vert \vert (\xi_1, \mu_1) \vert^2 - \vert (\xi-\xi_1, \mu-\mu_1) \vert^2 \right\vert, 
\end{align*}
and in the area of $high \times high \rightarrow high$ interaction, injecting in (\ref{measure8}):
\begin{align*}
\left\vert \frac{d}{d\xi_1} \H(\xi_1, \xi-\xi_1, \mu_1, \mu-\mu_1) \right\vert \gtrsim N^2 \simeq N_1^2.
\end{align*}
This estimate is equivalent to (\ref{derivative}), and concludes the proof of the bound (\ref{measure5}) in $S_2$.

\small
\bibliographystyle{plain}
\bibliography{biblio.bib}

\end{document}